\newtheorem{thm}{Theorem}
\newtheorem{lem}{Lemma}
\newcommand{\eps}{\varepsilon}
\def\Liminf{\liminf\limits}
\def\Limsup{\limsup\limits}
\renewcommand{\Re}{\mathbb{R}}
\renewcommand{\P}{\mathbf{P}}
\newcommand{\E}{\mathbf{E}}
\newcommand{\X}{\mathbb X}
\newcommand{\XX}{\mathbb X}
\newcommand{\Y}{\mathbb Y}
\begin{document}
\begin{frontmatter}

\title{Large deviation principle for one-dimensional SDEs with
discontinuous coefficients}

\author[a]{\inits{A.}\fnm{Alexei}\snm{Kulik}\corref{cor1}}\email
{kulik.alex.m@gmail.com}
\address[a]{Institute of Mathematics, Ukrai\-ni\-an National Academy of
Sciences,
Tereshchenkivska, 3, Kyiv, 01601, Ukraine}
\cortext[cor1]{Corresponding author.}

\author[b]{\inits{D.}\fnm{Daryna}\snm{Sobolieva}}\email
{dsobolieva@yandex.ua}
\address[b]{Taras Shevchenko National University of Kyiv, Volodymyrska,
64,\\ Kyiv, 01033,
Ukraine}

\markboth{A. Kulik, D. Sobolieva}{Large deviation principle for
one-dimensional SDEs with discontinuous coefficients}

\begin{abstract}
We establish the large deviation principle for solutions of
one-dimensional SDEs with discontinuous coefficients. The main
statement is formulated in a form similar to the classical
Wentzel--Freidlin theorem, but under the considerably weaker assumption
that the coefficients have no discontinuities of the second kind.
\end{abstract}

\begin{keyword}
Large deviations principle\sep
\xch{exponential}{Exponential} tightness\sep
\xch{contraction}{Contraction} and semicontraction principles
\MSC[2010] 60J55\sep60F10\sep60H10
\end{keyword}
\received{15 June 2016}% Updated by VTEXPTS2LaTeX.exe, 29.06.2016 11:04
%\revised{}
%
\accepted{15 June 2016}% Updated by VTEXPTS2LaTeX.exe, 29.06.2016 11:04
\publishedonline{1 July 2016}
\end{frontmatter}

\section{Introduction and the main result}\label{s1}

This paper aims at the large deviation principle (LDP) for the
solutions to the SDEs
\begin{equation}
\label{sde} dX_t^\eps=a\bigl(X_t^\eps
\bigr)\, dt+\eps\sigma\bigl(X^\eps_t\bigr) dW_t,
\quad X^\eps _0=x_0\in\Re,
\end{equation}
with possibly discontinuous coefficients $a, \sigma$. Recall that a
family of (the distributions of) random elements $ \{X^\eps\}
$ taking values in a Polish space $\XX$ is said to satisfy the \emph
{LDP} with \emph{rate function} $I:\X\to[0,\infty]$ and \emph{speed
function} $r:\Re^+\to\Re^+$ if
\begin{equation}
\label{ldpupper} \Limsup_{\eps\to\infty} r(\eps) \log\P\bigl\{ X^\eps\in F
\bigr\} \leq- \inf_{x \in F} I(x)
\end{equation}
for each closed $F\subset\XX$ and
\begin{equation}
\label{ldplower} \Liminf_{\eps\to\infty} r(\eps) \log\P\bigl\{ X^\eps\in G
\bigr\}\geq- \inf_{x \in G} I(x)
\end{equation}
for each open $G\subset\XX$. The rate function is assumed to be lower
semicontinuous; that is, all level sets $ \{ x: I(x) \leq c
\}$, $ c\geq0$, are closed. If all level sets are compact, then
the rate function is called \emph{good}.

We assume that, for some $C, c>0$,
\begin{equation}
\label{coef}
\bigl|a(x)\bigr|\leq C\bigl(1+|x|\bigr),\qquad c\leq\sigma^2(x)\leq C,\quad
x\in\Re.
\end{equation}
It is well known that, in this case, the SDE \eqref{sde} has a unique
weak solution, which can be obtained by a proper combination of the
time change transformation of a Wiener process and the Girsanov
transformation of the measure; see \cite{watanabe-ikeda}, IV, \S4. In
what follows, we fix $T>0$, interpret the (weak) solution $X^\eps= \{
X_t^\eps, t\in[0, T]\}$ to \eqref{sde} as a random element in $C(0, T)
$, and prove the LDP for the family $\{X^\eps\}$.
Since the law of $X^\eps$ does not depend on a possible change of the
sign of $\sigma$, in what follows, we assume without loss of generality
that $\sigma>0$.

Our principal regularity assumptions on the coefficients $a, \sigma$
is that they have no discontinuities of the second kind, that is, they
have left- and right-hand limits at every point $x\in\Re$. For a given
pair of such functions $a, \sigma$, we define the modified
functions $\bar a, \bar\sigma$ as follows:
\begin{itemize}
\item[(i)] if $a(x-)\geq0$ and $a(x+)\leq0$, then $\bar a(x)=0$ and
$\bar\sigma(x)=\sigma(x)$;
\item[(ii)] otherwise, $\bar a(x), \bar\sigma(x)$ equal either $\bar
a(x-), \bar\sigma(x-)$ or $\bar a(x+), \bar\sigma(x+)$ with the
choice made in such a way that
\[
{\bar a^2(x)\over\bar\sigma^2(x)}=\min \biggl({a^2(x-)\over\sigma
^2(x-)},
{a^2(x+)\over\sigma^2(x+)} \biggr).
\]
\end{itemize}

Denote by $\mathit{AC}(0, T)$ the class of absolutely continuous functions $\phi
:[0,T]\to\Re$, and for each $f\in \mathit{AC}(0, T)$, we denote by $\dot f$ its
derivative, which is well defined for almost all $t\in[0, T]$.

\begin{thm}\label{t1} Let $a, \sigma$ satisfy \eqref{coef} and have no
discontinuities of the second kind.
Then the family of distributions in $C(0,T)$ of the solutions to SDEs
\eqref{sde} satisfies the LDP with the speed function $r(\eps)=\eps^2$
and the good rate function equal
to
\[
I(f)= {1\over2}\int_0^T
{(\dot f_t-\bar a(f_t))^2\over\bar\sigma
^{2}(f_t)}\, dt
\]
for $f\in \mathit{AC}(0,T)$ with $f_0=x_0$ and $I(f)=\infty$ otherwise.
\end{thm}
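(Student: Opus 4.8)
The plan is to derive the full LDP from \emph{exponential tightness} together with the two local estimates
\[
\lim_{\delta\to0}\Limsup_{\eps\to0}\eps^2\log\P\Bigl\{\sup_{t\le T}|X^\eps_t-f_t|<\delta\Bigr\}\le -I(f)\le \lim_{\delta\to0}\Liminf_{\eps\to0}\eps^2\log\P\Bigl\{\sup_{t\le T}|X^\eps_t-f_t|<\delta\Bigr\},
\]
valid for every $f\in C(0,T)$ (with $I(f)=+\infty$ off $\{f\in\mathit{AC}(0,T):f_0=x_0\}$). First I would establish exponential tightness: from the construction of $X^\eps$ via a time change of a Wiener process followed by a Girsanov transformation (recalled after \eqref{coef}) together with $c\le\sigma^2\le C$ and $|a(x)|\le C(1+|x|)$, one obtains exponential control of $\sup_t|X^\eps_t|$ and of the modulus of continuity of $X^\eps$, hence exponential tightness in $C(0,T)$; this reduces the LDP to the weak LDP (the upper bound on compact sets and the lower bound on open sets), makes the rate function automatically good, and disposes of the case $I(f)=+\infty$ above. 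It is convenient to note in addition that a formal time change $t\mapsto\int_0^t\sigma^2(X^\eps_s)\,ds$ turns the problem into one for an SDE with constant diffusion coefficient and drift $a/\sigma^2$, under which the modified coefficients and the rate function transform consistently; this isolates the jumps of the drift, but since the time change itself depends on the path through the discontinuous $\sigma^2$, it does not by itself remove the main difficulty.

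For the \emph{local lower bound} fix $f\in\mathit{AC}(0,T)$ with $f_0=x_0$ and $I(f)<\infty$, and let $D$ be the at most countable set of discontinuities of $a,\sigma$. The structural fact I would use is that ``no discontinuities of the second kind'' means precisely that, for each $x_*\in D$, $a$ and $\sigma$ are uniformly close to the constants $(a(x_*-),\sigma(x_*-))$ on a left half-neighbourhood $(x_*-\eta,x_*)$ and to $(a(x_*+),\sigma(x_*+))$ on a right half-neighbourhood, even if further points of $D$ accumulate at $x_*$. I then approximate $f$ by $f^{(n)}\to f$ in $C(0,T)$ with $f^{(n)}_0=x_0$ and $I(f^{(n)})\to I(f)$ so that $f^{(n)}$ never rests at a point of $D$: whenever $f$ rests at some $x_*\in D$ falling under case (ii), $f^{(n)}$ is made to rest in the interior of a one-sided window on the side realising $\min\bigl(a^2(x_*-)/\sigma^2(x_*-),\,a^2(x_*+)/\sigma^2(x_*+)\bigr)$, while in case (i), where $\bar a(x_*)=0$, it may simply rest at $x_*$. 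For each $f^{(n)}$ the classical Freidlin--Wentzell lower bound applies essentially verbatim: a Girsanov change of measure with control $(\dot f^{(n)}_t-a(f^{(n)}_t))/\sigma(f^{(n)}_t)$ away from $D$ and a drift-cancelling control on the (one-sided, where $a,\sigma$ are nearly constant) resting windows keeps $X^\eps$ inside the $\delta$-tube around $f^{(n)}$ with probability tending to $1$, while the Radon--Nikodym density is $\exp(-(I(f^{(n)})+o_\delta(1))/\eps^2)$ up to subexponential factors. In case (i) the drift of $X^\eps$ points toward $x_*$ from both sides, so already without any change of measure $X^\eps$ stays $\delta$-close to $x_*$ over the resting interval with probability $\ge e^{-o(1)/\eps^2}$, matching $\bar a(x_*)=0$. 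Letting $n\to\infty$ gives the bound for $f$.

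For the \emph{local upper bound} I would use the exponential Chebyshev/Girsanov argument: on the event $\{\sup_t|X^\eps_t-f_t|<\delta\}$ the driving noise must reproduce, along $X^\eps$, the increments of $f$ up to $\delta$, and integrating the associated exponential supermartingale bounds the probability by $\exp(-(J_\delta(f)+o(1))/\eps^2)$ up to subexponential factors, where $J_\delta(f)$ is a ``tube cost'' which off $D$ dominates the restriction of $I$ to the $\delta$-tube and which, on each portion of the tube straddling some $x_*\in D$, is bounded below by the cost of confining near $x_*$ a diffusion with the one-sided characteristics $(a(x_*\pm),\sigma(x_*\pm))$ --- namely by $\frac12\,\bar a^2(x_*)/\bar\sigma^2(x_*)$ per unit time, up to $o_\delta(1)$. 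Since $J_\delta(f)\to I(f)$ as $\delta\to0$, this yields the upper estimate. Combining the two local estimates with the exponential tightness --- which upgrades the weak LDP to the full LDP and makes $I$ good --- completes the proof.

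The step I expect to be the main obstacle is the sharp treatment of the points of $D$. For the lower bound one must \emph{realise}, at cost rate $\frac12\bar a^2(x_*)/\bar\sigma^2(x_*)$, a path resting near $x_*$: in case (ii) the cheapest strategy confines $X^\eps$ to a single side of $x_*$, which is exactly what forces the $\min$ in the definition of $\bar a,\bar\sigma$, whereas in case (i) no effort at all is needed and the cost is $0$, strictly below that $\min$ when $a(x_*-)>0>a(x_*+)$. For the upper bound one must prove the matching inequality --- that a process kept near $x_*$ throughout a time interval cannot do better than the one-sided Freidlin--Wentzell cost on the cheaper side --- and it is here that the precise form of the modified coefficients is pinned down. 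Finally, since $D$ may be dense and may accumulate at a given $x_*$, all estimates near $x_*$ must be carried out at spatial scale $O(\eps)$ using only the one-sided uniform closeness of $a,\sigma$ to their one-sided limits, that is, relying solely on the absence of discontinuities of the second kind.
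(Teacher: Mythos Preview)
Your reduction to exponential tightness plus local estimates matches the paper's Section~\ref{s31}, but thereafter the routes diverge completely. The paper never attempts a direct tube estimate at a discontinuity. Instead it bootstraps: for piecewise constant $a,\sigma$ with a \emph{single} jump it quotes Krykun's joint LDP for the pair $(X^\eps,Z^\eps)$, where $Z^\eps_t=\int_0^t 1_{X^\eps_s>0}\,ds$ is the occupation time above the jump, and contracts out $Z^\eps$; for finitely many jumps it represents $X^\eps$ as a patchwork of independent one-jump diffusions and applies the semicontraction bounds of Lemma~\ref{l1}; a random time change then handles general $\sigma$ with piecewise constant $a/\sigma^2$; and a final Girsanov perturbation reduces arbitrary $a,\sigma$ (with one-sided limits) to that last case. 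The occupation-time variable is the extra ingredient your scheme lacks.

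The concrete gap is your local upper bound at a point $x_*\in D$ where $f$ rests. You claim the confinement cost is at least $\tfrac12\,\bar a^2(x_*)/\bar\sigma^2(x_*)$ per unit time, but the exponential-supermartingale argument you sketch cannot produce this: any single test function inserted into an exponential martingale sees one fixed pair of coefficients, whereas $X^\eps$ crosses $x_*$ on scale $O(\eps)$ and samples both $(a(x_*-),\sigma(x_*-))$ and $(a(x_*+),\sigma(x_*+))$ in a random proportion. The sharp cost is $\min_{z\in[0,1]} L(0,0,z)$ in the paper's notation---an optimisation over the occupation fraction $z$---and identifying this minimum with $\bar a^2(x_*)/\bar\sigma^2(x_*)$ (trivially $0$ in case~(i), a monotonicity check in case~(ii)) presupposes the joint rate function $L(x,y,z)$ that Krykun's theorem supplies. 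Without an occupation-time LDP, a relaxed-control formulation, or some equivalent device tracking how the process allocates time between the two sides of $x_*$, the upper bound is unproved. Your closing acknowledgement that this is ``the main obstacle'' is accurate, but the proposal offers no mechanism to overcome it; nor does it explain how to cope when $D$ is dense and the tube around $f$ meets infinitely many discontinuities---the situation the paper's final Girsanov reduction is designed to absorb.
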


Theorem \ref{t1} has the form very similar to the classical
Wentzel--Freidlin theorem (\cite{freidlin-wentzell}, Chapter 3, \S2),
which establishes LDP in a much more general setting, for
multidimensional Markov processes that may contain both diffusive and
jump parts. However, the Wentzel--Freidlin approach substantially
exploits the~continuity of infinitesimal characteristics of the
process. The natural question arises: to which extent the continuity
assumption can be relaxed in this theory. In \cite
{chiang-sheu1,chiang-sheu2}, the LDP was established for
multidimensional diffusions with unit diffusion matrix and drift
coefficients discontinuous along a given hyperplane; see also \cite
{alanyali-hajek1,alanyali-hajek2,chiang-sheu3} for some other results
in this direction. In \cite{krykun}, this result, in the
one-dimensional setting, is extended to the case of piecewise smooth
drift and diffusion coefficients with one common discontinuity point.
The technique in the aforementioned papers is based on the analysis of
the joint distribution of the process itself and its occupation time in
the half-space above the discontinuity point (surface) and is hardly
applicable when the structure of the discontinuity sets for the
coefficients is more complicated. In \cite{kulik-sobolieva}, the LDP
for a one-dimensional SDE with zero drift coefficient was established
under a very mild regularity condition on $\sigma$: for the latter, it
was only assumed that its discontinuity set has zero Lebesgue measure.
Extension of this result to the case of nonzero drift coefficient is
far from being trivial. In \cite{sobolieva}, such an extension was
provided, but the assumption therein that $a/\sigma^2$ possesses
a~bounded derivative is definitely too restrictive. In this paper, we
summarize the studies from \cite{kulik-sobolieva} and \cite{sobolieva};
note that the assumption on $\sigma$ in the current paper is slightly
stronger than in~\cite{kulik-sobolieva}.\looseness=1

We note that our main result, Theorem \ref{t1}, well illustrates the
relation of the LDP with discontinuous coefficients to the classical
Wentzel--Freidlin theory: the rate function in this theorem is given in
a classical form, but with the properly modified coefficients. The
heuristics of this modification is clearly seen. Namely, thanks to
(ii), the rate functional $I$ is lower semicontinuous; see Section \ref
{s22}. Assertion (i) corresponds to the fact that, in the case
$a(x-)\geq0$ and $a(x+)\leq0$, the family $X^\eps$ with $X^\eps_0=x$
weakly converges to the constant function equal to $x$. We interpret
the limiting function as the solution to the ODE $\dot x_t=\bar
a(x_t)$, and note that a similar ODE for $a$ may fail to have a
solution at all.

\section{Preliminaries to the proof}\label{s2}

\subsection{Exponential tightness, contraction principle}\label{s21}
Recall that a family $\{X^\eps\}$ is called \emph{exponentially tight}
with the speed function $r(\eps)$ if for each $Q>0$, there exists a
compact set $K\subset\XX$ such that
\[
\limsup_{\eps\to0}r(\eps)\log\P \bigl(X^\eps\not\in K
\bigr)\leq-Q.
\]
For an exponentially tight family, the LDP is equivalent to the \emph
{weak} LDP; the latter by definition means that the upper bound \eqref
{ldpupper} holds for all \emph{compact} sets~$F$, whereas the lower
bound \eqref{ldplower} still holds for all open sets $G$. An equivalent
formulation of the weak LDP is the following: for each
$x\in\XX$,\vadjust{\goodbreak}
\begin{align}
&{}\lim_{\delta\to0}\limsup_{\eps\to0}r(\eps)
\log\P \bigl(X^\eps\in B_\delta(x) \bigr) %=
\nonumber
\\
&{}\quad =\lim_{\delta\to0}\liminf_{\eps\to0}r(\eps)\log\P
\bigl(X^\eps\in B_\delta(x) \bigr)=-I(x),\label{wLDP}
\end{align}%
where $B_\delta(x)$ denotes the open ball with center $x$ and radius
$\delta$.

To prove \eqref{wLDP}, we will use a certain extension of the \emph
{contraction principle}, which in its classical form (e.g., \cite
{feng-kurtz}, Section 3.1, and \cite{dembo-zeitouni}, Section 4.2.1)
states the LDP for a family $X^\eps=F(Y^\eps)$, where $Y^\eps$ is a
family of random elements in a Polish space $\mathbb{Y}$ that satisfies
an LDP with a good rate function $J$, and $F:\mathbb{Y}\to\X$ is a
\emph{continuous} mapping. The rate function for $X^\eps$ in this case
has the form
\[
I(x)=\inf_{y: F(y)=x}J(y).
\]
In the sequel, we use two different representations of our particular
family $X^\eps$ as an image of certain family whose LDP is well
understood; however, the functions $F$ in these representations fail to
be continuous. Within such a~framework, the following general lemma
appears quite useful. We denote by $\rho_\X, \rho_\Y$ the metrics in
$\XX,\Y$ and by $\varLambda_F$ the set of continuity points of a mapping
$F:\Y\to\X$. Note that $\varLambda_F$ is Borel measurable; see Appendix
II in \cite{billingsley}.

\begin{lem}\label{l1} Let family $Y^\eps$ satisfy the LDP with speed
function $r(\eps)$ and rate function~$J$. Assume also that
\[
\P \bigl(Y^\eps\in\varLambda_F \bigr)=1, \quad\eps>0.
\]
Then, for any $x\in\XX$,
\begin{equation}
\label{scup} \lim_{\delta\to0}\limsup_{\eps\to0}r(\eps)
\log\P\bigl(X^\eps\in B_\delta (x)\bigr)\leq-I_{\mathit{upper}}(x),
\end{equation}
\begin{equation}
\label{sclow} \lim_{\delta\to0}\liminf_{\eps\to0}r(\eps)
\log\P\bigl(X^\eps\in B_\delta (x)\bigr)\geq-I_{\mathit{lower}}(x)
\end{equation}
with
\[
I_{\mathit{upper}}(x)=\lim_{\delta\to0}\lim_{\gamma\to0}\inf
_{y\in\varXi_{\gamma
,\delta}(x)}J(y), \qquad I_{\mathit{lower}}(x)=\lim_{\delta\to0}
\inf_{y\in\varTheta
_\delta(x)}J(y),
\]
where
\begin{align*}
\varTheta_\delta(x)&{}= \bigl\{y\in\varLambda_F:\rho_X \bigl(x, F(y) \bigr)<\delta \bigr\},\\
\varXi_{\gamma, \delta}(x)&{}= \bigl\{y\in\Y: \rho_\Y \bigl(y, \varTheta_\delta(x) \bigr)<\gamma \bigr\}.
\end{align*}
\end{lem}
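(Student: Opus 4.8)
The plan is to push the LDP of $Y^\eps$ forward through $F$ by hand, exploiting that $Y^\eps$ charges only the set $\varLambda_F$ on which $F$ is continuous, and compensating for the failure of continuity of $F$ by a careful $\gamma$-enlargement in the upper bound. Write $X^\eps=F(Y^\eps)$. Since $\varTheta_\delta(x)\subset\varLambda_F$ and $\P(Y^\eps\in\varLambda_F)=1$, on the full-probability event $\{Y^\eps\in\varLambda_F\}$ the events $\{F(Y^\eps)\in B_\delta(x)\}$ and $\{Y^\eps\in\varTheta_\delta(x)\}$ coincide, so
\[
\P\bigl(X^\eps\in B_\delta(x)\bigr)=\P\bigl(Y^\eps\in\varTheta_\delta(x)\bigr),\qquad\eps>0.
\]
(Here one also notes that $\varTheta_\delta(x)$ is Borel, being relatively open in the Borel set $\varLambda_F$ for the continuous restriction $F|_{\varLambda_F}$; replacing $F$ by $F|_{\varLambda_F}$, which is Borel, changes nothing in the probabilities above.) The difficulty is that $\varTheta_\delta(x)$ is in general neither open nor closed, so the LDP bounds for $Y^\eps$ cannot be applied to it directly; the two estimates then diverge in how this is handled.

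For the upper bound \eqref{scup} I would use $\overline{\varTheta_\delta(x)}=\{y:\rho_\Y(y,\varTheta_\delta(x))=0\}\subset\varXi_{\gamma,\delta}(x)$ for every $\gamma>0$. Applying the LDP upper bound of $Y^\eps$ to the \emph{closed} set $\overline{\varTheta_\delta(x)}$ yields
\[
\limsup_{\eps\to0}r(\eps)\log\P\bigl(X^\eps\in B_\delta(x)\bigr)\le-\inf_{y\in\overline{\varTheta_\delta(x)}}J(y)\le-\inf_{y\in\varXi_{\gamma,\delta}(x)}J(y).
\]
Since $\inf_{\varXi_{\gamma,\delta}(x)}J$ is nondecreasing as $\gamma\downarrow0$ and then as $\delta\downarrow0$, while $\limsup_{\eps}r(\eps)\log\P(X^\eps\in B_\delta(x))$ is nondecreasing in $\delta$, all the relevant limits exist, and passing to $\gamma\to0$ and then $\delta\to0$ gives exactly \eqref{scup}.

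For the lower bound \eqref{sclow} I would argue pointwise. Fix $\delta>0$ and an arbitrary $y_0\in\varTheta_\delta(x)$ (if $\varTheta_\delta(x)=\emptyset$ there is nothing to prove, with the convention $\inf\emptyset=+\infty$). Since $y_0\in\varLambda_F$ and $\rho_\X(x,F(y_0))<\delta$, continuity of $F$ at $y_0$ provides $\eta>0$ with $F(B_\eta(y_0))\subset B_\delta(x)$, hence $\{Y^\eps\in B_\eta(y_0)\}\subset\{X^\eps\in B_\delta(x)\}$. Applying the LDP lower bound of $Y^\eps$ to the open ball $B_\eta(y_0)$,
\[
\liminf_{\eps\to0}r(\eps)\log\P\bigl(X^\eps\in B_\delta(x)\bigr)\ge-\inf_{y\in B_\eta(y_0)}J(y)\ge-J(y_0).
\]
Taking the supremum over $y_0\in\varTheta_\delta(x)$ and then $\delta\to0$ (again the left-hand side is nondecreasing in $\delta$ and $\inf_{\varTheta_\delta(x)}J$ is nondecreasing as $\delta\downarrow0$, so the limits exist) gives \eqref{sclow}.

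The routine parts here are the monotone-limit bookkeeping and the $\limsup/\liminf$ manipulations. The points where I expect to spend real care, i.e.\ the main obstacles, are: (i) the measurability issue, namely that $\varTheta_\delta(x)$ and the event $\{X^\eps\in B_\delta(x)\}$ are meaningful even though $F$ need not be globally Borel, which is resolved by restricting to $\varLambda_F$ and using $\P(Y^\eps\in\varLambda_F)=1$; and (ii) the loss incurred in the upper bound on passing from $\varTheta_\delta(x)$ to its closure. That loss is genuine — $\overline{\varTheta_\delta(x)}$ may acquire points of small $J$ lying exactly on the boundary $\rho_\X(x,F(\cdot))=\delta$ or at discontinuities of $F$ — and it is precisely what forces $I_{\mathrm{upper}}$ to be defined through the enlarged sets $\varXi_{\gamma,\delta}(x)$ rather than through $\varTheta_\delta(x)$ itself; when $F$ is continuous and $\varLambda_F=\Y$ both $I_{\mathrm{upper}}$ and $I_{\mathrm{lower}}$ collapse to the classical $\inf_{y:F(y)=x}J(y)$, recovering the usual contraction principle.
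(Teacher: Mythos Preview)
Your proof is correct and follows essentially the same route as the paper: identify $\P(X^\eps\in B_\delta(x))=\P(Y^\eps\in\varTheta_\delta(x))$, apply the LDP upper bound to the closure $\overline{\varTheta_\delta(x)}\subset\varXi_{\gamma,\delta}(x)$, and for the lower bound use continuity of $F$ at each $y_0\in\varTheta_\delta(x)$ to find a small ball mapped into $B_\delta(x)$. Your version is in fact more careful than the paper's, which omits the measurability remark and the monotone-limit bookkeeping (and even contains a typo, writing $\limsup$ where $\liminf$ is meant in the lower bound).
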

\begin{proof} We have
\[
\P \bigl(X^\eps\in B_\delta(x) \bigr)=\P \bigl(X^\eps
\in B_\delta(x), Y^\eps\in\varLambda_F \bigr)=\P
\bigl(Y^\eps\in\varTheta_\delta(x) \bigr).
\]
Thus, the upper bound in the LDP for $\{Y^\eps\}$ gives
\[
\limsup_{\eps\to0}r(\eps)\log\P \bigl(X^\eps\in
B_\delta(x) \bigr)\leq-\inf \bigl\{ J(y), y\in\bar\varTheta_\delta(x)
\bigr\},
\]
where $\bar\varTheta_\delta(x)$ denotes the closure of $\varTheta_\delta(x)$.
Since $\bar\varTheta_\delta(x)\subset\varXi_{\gamma, \delta}(x)$ for any
$\gamma>0$, this provides \eqref{scup}. The proof of \eqref{sclow} is
even simpler: for any $y\in\varTheta_\delta(x)$, there exists $r>0$
such that
the image of the ball $B_r(y)$ under $F$ is contained in $B_\delta(x)$,
which yields
\[
\limsup_{\eps\to0}r(\eps)\log\P \bigl(X^\eps\in
B_\delta(x) \bigr)\leq\limsup_{\eps\to0}r(\eps)\log\P
\bigl(Y^\eps\in B_r(y) \bigr)\geq-J(y).\qedhere
\]
\end{proof}

Lemma \ref{l1} is a simplified and more precise version of Lemma 4 in
\cite{kulik}. The functions $I_{\mathit{upper}}$, $I_{\mathit{lower}}$ are lower
semicontinuous: we can show this easily using that, for any sequence
$x_n\to x$, the sets $\varTheta_{\delta/2}(x_n)$ are embedded into
$\varTheta
_{\delta}(x)$ for~$n$ large enough (see, e.g., Proposition 3 in \cite{kulik}).
In fact, Lemma \ref{l1} says that for an arbitrary image of a family
$\{
Y^\eps\}$, one part of an LDP (the upper bound) holds with one rate function,
whereas the other part (the lower bound) holds with another rate
function. This is our reason to call \eqref{scup} and \eqref{sclow}
the \emph{upper} and the \emph{lower} \emph{semicontraction
principles}. To prove \eqref{wLDP}, it suffices to verify the inequalities
\begin{equation}
\label{verify} I_{\mathit{lower}}(x)\leq I(x), \qquad I(x)\leq I_{\mathit{upper}}(x),
\quad x\in\XX.
\end{equation}
We refer to \cite{kulik} for a more discussion and an example where
the pair
of semicontraction principles do not provide an LDP.

\subsection{Lower semicontinuity of $I$}\label{s22} In this section, we
prove directly that the functional $I$ specified in Theorem~\ref{t1} is
lower semicontinuous, that is, it is indeed a rate functional. This
will explain the particular choice of the modified functions $\bar a,
\bar\sigma$. In addition, this will simplify the proofs, where we will
use the representation for $I(x)$ presented further.

Define
\[
S(x)=\int_0^x {\bar a(z)\over\bar\sigma^{2}(z)}\, dz,
\quad x\in\Re.
\]
Then $I(f)$, if it is finite, can be represented as
\begin{align}
I(f)&{}={1\over2}\int_0^T
{(\dot f_t)^2\over\bar\sigma^{2}(f_t)}\, dt+ \bigl[S(f_T)-S(f_0) \bigr]
+{1\over2}\int_0^T
{\bar a^2(f_t)\over\bar\sigma^{2}(f_t)}\, dt
\nonumber
\\
&{}
=:I_1(f)+I_2(f)+I_3(f).\label{repI}
\end{align}
The function $S$ is continuous; hence, the functional $I_2$ is just
continuous. The function $\bar a^2/\bar\sigma^2$ is lower
semicontinuous by the choice of $\bar a, \bar\sigma$; thus, the
functional $I_3$ is lower semicontinuous. Finally, we can represent
$I_1$ in the form
$
I_1(f)=I_0(\varSigma(f_\cdot)),
$
where the function
\[
\varSigma(x)=\int_0^x{1\over\bar\sigma(z)}\, dz
\]
is continuous, and the functional
\[
I_0(f)={1\over2}\int_0^T
(\dot f_t)^2\, dt
\]
is known to be lower semicontinuous (this is just the rate functional
for the family $\{\eps W\}$). Hence, $I_1$ is lower semicontinuous,
which completes the proof of the statement.

\section{Proof of Theorem \ref{t1}}

\subsection{Exponential tightness and the weak LDP}\label{s31} In this
section, we prove that the family $\{X^\eps\}$ is {exponentially tight}
with the speed function $r(\eps)=\eps^2$. Note that
\[
M^\eps_t:=\int_0^t
\sigma^2 \bigl(X_s^\eps \bigr)\,
dW_s
\]
is a continuous martingale with the quadratic characteristics
\[
\bigl\langle M^\eps \bigr\rangle_t=\int
_0^t\sigma^2 \bigl(X_s^\eps
\bigr)\, ds\leq Ct;
\]
see \eqref{coef}. Recall that $M^\eps$ can be represented as a Wiener
process with the time change $t\mapsto\langle M^\eps\rangle_t$; see,
for example, \cite{watanabe-ikeda}, II. \S7. Then, for each $R$,
\begin{align*}
&\limsup_{\eps\to0}\eps^2\log\P \Bigl(\sup
_{t\in[0, T]}\eps\bigl|M^\eps_t\bigr|>R \Bigr)
\\
&\quad{}\leq\limsup_{\eps\to0}\eps^2\log\P \Bigl(\sup
_{t\in[0, CT]}\eps|W_t|>R \Bigr)=-{R^2\over2CT}.
\end{align*}
On the other hand, for each $\omega\in\varOmega$ such that $\eps
|M^\eps_t(\omega)|>R$, the corresponding trajectory of $X^\eps$ satisfies
\[
\bigl|X^\eps_t\bigr|\leq|x_0|+C\int_0^t
\bigl(1+\bigl|X_s^\eps\bigr| \bigr)\, ds+R,
\]
and therefore, by the Gronwall inequality,
\[
\sup_{t\in[0,T]}\bigl|X^\eps_t\bigr|
\leq\bigl(|x_0|+CT+R\bigr)e^{CT}.
\]
Therefore, for any $Q>0$, there exists $R$ such that
\[
\limsup_{\eps\to0}\eps^2\log\P \Bigl(\sup
_{t\in[0, T]}\bigl|X^\eps_t\bigr|>R \Bigr)\leq-Q.
\]

Next, recall the Arzel\`a--Ascoli theorem: for a closed set $K\subset
C(0,T)$ to be compact, it is necessary and sufficient that it is
bounded and equicontinuous. The family $\eps M^\eps$ is represented as
a time changed family $\eps W^\eps$, where each $W^\eps$ is a Wiener
process, and the derivative of $\langle M^\eps\rangle_t$ is bounded by
$C$. Using these observations, it is easy to deduce the exponential
tightness for $\{\eps M^\eps\}$ using the well-known fact that the
family $\{\eps W\}$ is exponentially tight. On the other hand, for any
$\omega$ such that the trajectory of $X^\eps_t$ is bounded by $R$, the
corresponding trajectory of the process $X^\eps_t-\eps M^\eps_t$
satisfies the Lipschitz condition w.r.t.\ $t$ with the constant
$C(1+R)$. Combined with the previous calculation, this easily yields
the required exponential tightness.

In what follows, we proceed with the proof of \eqref{wLDP}. Since now
the state space $\XX=C(0, T)$ is specified, we change the notation and
denotes the points in this space by $f, g, \dots$. Since the set
$B_1(f)$ is bounded, the law of $X^\eps$ restricted to any $B_\delta
(f)$ does not change if we change the coefficients $a, \sigma$ on the
intervals $(-\infty, -R], [R, \infty)$ with $R>0$ large enough. Hence,
we furthermore assume the coefficients $a, \sigma$ to be constant on
such intervals for some $R$.

\subsection{Case I. Piecewise constant $a,\sigma$ with one
discontinuity point} We proceed with the further proof in a
step-by-step way, increasing gradually the classes of the coefficients
$a, \sigma$ for which the corresponding LDP is proved. First, let
$a,\sigma$ be constant on the intervals $(-\infty, z)$ and $(z,
\infty
)$ with some $z\in\Re$. Without loss of generality, we can assume that
$z=0$. Then we can use Theorem 2.2 \cite{krykun}, where the LDP with
the speed function $r(\eps)=\eps^2$ is established for the pair
$(X^\eps
, Z^\eps)$ with
\[
Z_t^\eps=\int_0^t1_{(0, \infty)}
\bigl(X^\eps_s \bigr)\, ds, \quad t\in[0, T].
\]
The corresponding rate function in \cite{krykun} is given in the
following form. Denote $a_\pm=a(0\pm), \sigma_\pm=\sigma(0\pm)$ and
define the class $H(f)$ of functions
$\psi\in \mathit{AC}(0, T)$ such that
\[
\dot\psi_t\quad\left\{ %
\begin{array}{ll}
=0, & f_t<0; \\
=1, & f_t>0; \\
\in[0,1],& f_t=0.
\end{array}\right. %
\]
Then the rate functional for $(X^\eps, Z^\eps)$ equals
\[
I(f, \psi)={1\over2}\int_0^T
L(f_t, \dot f_t, \phi_t)\, dt
\]
with
\[
L(x, y, z)= \left\{ %
\begin{array}{ll}
{(y -a(x))^2\over\sigma^2(x)}, & x\not=0; \\
{(a_+ z+a_-(1-z))^2\over\sigma^2_+z+\sigma_-^2 (1-z)}, & x=0, \,
{a_-\over\sigma^2_-}>{a_+\over\sigma^2_+};\\
{a_+^2 \over\sigma^2_+}z+{a_-^2 \over\sigma^2_-}(1-z), & x=0, \,
{a_-\over\sigma^2_-}\leq{a_+\over\sigma^2_+}
\end{array} \right.%
\]
for all pairs $(f, \psi)$ such that $f\in \mathit{AC}(0, T), f_0=x_0, \psi\in
H(f)$ and, for all other pairs, $I(f, \psi)=\infty$.

From this result, using the {contraction principle} (see Section \ref
{s2}), we easily derive the LDP for $X^\eps$ with the rate function
\[
I(f)=\inf_{\psi\in H(f)}I(f, \psi)=\int_0^T
L(f_t, \dot f_t)\, dt, \qquad L(x, y)=\inf
_{z\in[0,1]}L(x, y,z)
\]
for $f\in \mathit{AC}(0, T), f_0=x_0$ and $I(f)=\infty$ otherwise. Now only a
minor analysis is required to show that this rate function actually
coincides with that specified in Theorem \ref{t1}. First, we observe that
\[
L(0,y)= {\bar a^2(0)\over\bar\sigma^{2}(0)}.
\]
This is obvious if either $a_-/\sigma^2_-\leq a_+/\sigma^2_+$ or
$a_->0, a_+<0$. In the case where $a_-/\sigma^2_->a_+/\sigma^2_+$ and
$a_-, a_+$ have the same sign, we can verify directly that\break
$L'_z(0,y,z)$ have the same sign for $z\in[0,1]$, which completes the
proof of the required identity.

We will use repeatedly the following fact, which follows easily from
the change-of-variables formula: for any $f\in \mathit{AC}(0,T)$ and any set
$A\subset\Re$ with zero Lebesgue measure, the Lebesgue measure of the set
\begin{equation}
\label{zero} \int_0^T 1_{f_t\in A, \dot f_t\not=0}\, dt=0;
\end{equation}
see, for example, Lemma 1 in \cite{kulik-sobolieva}. Applying \eqref
{zero} with $A=\{0\}$, we conclude that in the above expression for
$I(f)$, the function $L$ can be changed to
\[
L(x,y)={(y-\bar a(x))^2\over\bar\sigma^{2}(x)},
\]
which completes the proof of Theorem \ref{t1} in this case.

\subsection{Case II. Piecewise constant $a,\sigma$} Let, for some
$z_1<\dots<z_m$, the functions $a, \sigma$ be constant on the intervals
$(-\infty, z_1)$, $(z_{1}, z_2),$ $\dots, (z_m, \infty)$. Assume that
$x_0\not\in\{z_k, k=1, \dots, m\}$, which does not restrict the
generality of the construction given further, and define the functions
$a_k, \sigma_k, k=0,\dots, m$ by
\[
a_0(x)=a(x_0),\qquad\sigma_0(x)=
\sigma(x_0), \quad x\in\Re,
\]
\begin{align*}
a_k(x)&{}=\left \{ %
\begin{array}{ll}
a(z_k-), & x<z_k, \\
a(z_k+), & x\geq z_k,
\end{array} \right.%
\\
\sigma_k(x)&{}= \left\{ %
\begin{array}{ll}
\sigma(z_k-), & x<z_k, \\
\sigma(z_k+), & x\geq z_k,
\end{array}\right. %
\quad
k=1, \dots,m.
\end{align*}
Consider a family of independent processes $Y^{0, \eps}, Y^{n,k, \eps},
k=1, \dots, m, n\geq1$, such that $Y^{0, \eps}$ solves SDE \eqref{sde}
with the coefficients $a_0, \sigma_0$ and each $Y^{n,k, \eps}$ solves a
similar SDE with the coefficients $a_k, \sigma_k$ and the initial value
$z_k$. Define iteratively the process $\tilde X^\eps$ in the following
way: put $\tilde X^\eps$ equal $Y^{0, \eps}$ until the time moment
\[
\tau_1:=\inf \bigl\{t:Y^{0, \eps}_t\in
\{z_k, k=1, \dots, m\} \bigr\}.
\]
Define the random index $\kappa_1\in\{1, \dots, m\}$ such that $Y^{0,
\eps}_{\tau_1}=z_{\kappa_1}$. Then put $\tilde X^\eps_t=Y^{1,\kappa_1,
\eps}_{t-\tau_1}$ until the first time moment $\tau_2$ when this
process hits $\{z_k, k=1, \dots, m\}\setminus\{z_{\kappa_1}\}$.
Iterating this procedure, we get a process $\tilde X^\eps_t$ with
\[
X_t^\eps=Y^{0, \eps}_t, \quad t\leq
\tau_1, \qquad X_t^\eps=Y^{n,\kappa
_k, \eps}_{t-\tau_n},
\quad t\in[\tau_n, \tau_{n+1}],\  n\geq1.
\]
It follows from the strong Markov property of $X^\eps$ that $\tilde
X^\eps$ has the same law with $X^\eps$. Hence, the given construction
in fact represents the law of $X^\eps$ as the image of the joint law
family of independent processes $Y^{0, \eps}, Y^{n,k, \eps}, k=1,
\dots
, m, n\geq1$. Each of these processes is a solution to \eqref{sde}
with corresponding coefficients having at most one discontinuity point;
hence, the LDP for them is provided in the previous section. Our idea
is to deduce the LDP $X^\eps$ via a~version of
the contraction principle. With this idea in mind, we first perform a
simplification of the above representation. For some $N$ (the choice of
$N$ will be discussed below), we consider the space $\mathbb
{Y}=C(0,T)^{1+mN}$ and construct a function $F:\mathbb{Y}\to\X=C(0,T)$
in the following way. For $y=(y^0, y^{n,k}, k=1, \dots,m, n=1, \dots,
N)$, we first define $\tau_1(y)=\inf\{t: y^0_t\in\{z_k, k=1, \dots,
m\}
\}$ with the usual convention that $\inf\varnothing=T.$ The function
$[F(y)]_t, t\in[0, T]$, is defined to be equal to $y^0_t$ for $t\leq
\tau_1(y)$. If $\tau_1(y)<T$, then the construction is iterated: we
define $\kappa_1(y)$ by $y^0_{\tau_1(y)}=x_{\kappa_1(y)}$ and put, for
$t\geq\tau_1(y)$, $[F(y)]_t$ equal to $y^{1,\kappa_1(y)}_{t-\tau
_1(y)}$ up to the first moment when this function hits $\{z_k, k=1,
\dots, m\}\setminus\{x_{\kappa_1(y)}\}$. We iterate this procedure at
most $N$ times; that is, if $\tau_N(y)<T$, then we put
\[
\bigl[F(y) \bigr]_t=y^{N,\kappa_N(y)}_{t-\tau_N(y)}, \quad t\in
\bigl[ \tau_N(y), T \bigr].
\]
Denote
\[
\varDelta:=\min_{k=2, \dots, m}(z_k-z_{k-1}).
\]
For any fixed $f\in C(0,T)$, we can choose $\delta_f>0$ small enough
and $N_f$ large enough so that each $g\in B_\delta(f)$ has less than
$N$ $\varDelta$-oscillations on $[0,T]$. Hence, if in the above
construction, $N$ is taken equal to $N_f$, then the restriction of the
law of $X^\eps$ to any ball $B_\delta(f), \delta\leq\delta_f$, equals
to the same restriction of the image of the joint law of the finite
family $Y^{0, \eps}, Y^{n,k, \eps}, k=1, \dots, m, n=1, \dots, N$,
under the mapping $F$ specified before.

We aim to verify \eqref{wLDP}, and we argue in the following way. We
fix $f$ and choose $N=N_f$ as before, so that the laws of $X^\eps$,
restricted to $B_\delta(f)$ for $\delta$ small, can be obtained as the
image under $F$ specified before. Then we prove \eqref{verify} at this
\emph{particular} point $x=f$, with $I_{\mathit{lower}}, I_{\mathit{upper}}$ being
constructed by this \emph{particular} $F$. This yields the required
weak LDP \eqref{wLDP}.

Within such an argument, we have to treat for any $N$ the image under
the corresponding $F$ of the family of laws in $\mathbb
{Y}=C(0,T)^{1+mN}$, which, according to the result proved in the
previous section, satisfies the LDP with the rate function
\[
J(y)={1\over2} \int_0^T
{(\dot y^0_t- a_0(y_t^0))^2\over\sigma
^{2}_0(y_t^0)}\, dt+{1\over2}\sum
_{k=1}^m\sum_{n=1}^N
\int_0^T{(\dot y^{k,n}_t- \bar a_k(y^{k,n}_t))^2\over\bar\sigma
^{2}_k(y_t^{k,n})}\, dt
\]
for
\[
y= \bigl(y^0, y^{n,k} \bigr)_{k\leq m, n\leq N}
\]
such that $y^0, y^{n,k}\in \mathit{AC}(0,T)$, $y^0_0=x_0, y^{n,k}_0=z_k$ and
$J(y)=\infty$ otherwise. To apply Lemma \ref{l1} in this setting, we
first analyze the structure and the properties of the corresponding $F$.

Each trajectory $f=F(y)\in C(0,T)$ is actually a patchwork, which
consists of pieces of trajectories $y^0, y^{n,k}, k=1, \dots,m, n=1,
\dots, N$: the pasting points are $\tau_1(y), \dots, \tau_r(y),$
$r=r(y)\leq N$, and after $\tau_n(y)$, the (part of the) new trajectory
is used with the number $\kappa_n(y)$. For a $y_l\to y$ in $\Y$, the
corresponding sequence of trajectories $f_l=F(y_l)$ may fail to
converge to $f$ because the functionals $\tau_n(\cdot), \kappa
_n(\cdot
)$ are not continuous. However, the above ``patchwork representation''
easily yields the following two facts:
\begin{itemize}
\item any limit point $f_*$ of the sequence $\{f_l\}$ possesses a
similar representation with the same $y=\lim_l y_l$ and with the
corresponding pasting points
$\tau_1^*, \tau_2^*, \dots$ and numbers $\kappa_1^*, \kappa_2^*,
\dots$
being partial limits of the sequences $\{\tau_1(y_l)\}, \{\tau
_2(y_l)\}
, \dots$ and $\{\kappa_1(y_l)\}, \{\kappa_2(y_l)\}, \dots$;
\item if the functions $\tau_1(\cdot), \tau_2(\cdot), \dots$ are
continuous at a given point $y\in\Y$, then $y\in\varLambda_F$.
\end{itemize}

Using the first fact, now it is easy to prove the second inequality in
\eqref{verify}. If it fails, then for a given $f$, there exists a sequence
$\{y^l\}$ such that $F(y_l)\to f$ and $J(y_l)\leq c<I(f)$. Since the
level set $\{y:J(y)\leq c\}$ is compact, we can assume without loss of
generality that $y_l$ converge to some $y$; recall that $J$ is lower
semicontinuous and thus $J(y)\leq c$. The function $f$ possesses the
above patchwork representation with the trajectories taken from $y$,
some pasting points $\tau_1^*, \dots,\tau_r^*$, and some numbers
$\kappa
_1^*, \dots,\kappa_r^*$. From this representation it is clear that
$f\in \mathit{AC}(0,T)$ and $f_0 =x_0$: if this fails, then the same properties
fail at least for one trajectory from the family $y$ and thus
$J(y)=\infty$, which contradicts to $J(y)\leq c$. Hence, we have
\[
I(f)={1\over2}\int_0^T
{(\dot f_t-\bar a(f_t))^2\over\bar\sigma
^{2}(f_t)}\, dt={1\over2}\sum
_{n=1}^{r+1}\int_{\tau^*_{n-1}}^{\tau
^*_{n}}
{(\dot f_t-\bar a(f_t))^2\over\bar\sigma^{2}(f_t)}\, dt,
\]
where we put $\tau^*_{0}=0, \tau^*_{r+1}=T$. Let $x_0$ be located on
some interval $(z_{k-1}, z_k)$, $k=2, \dots,m$, say, $x_0\in(z_1,
z_2)$. Then, on the interval $(0, \tau^*_{1})$, the trajectory $f$ is
contained in the segment
$[z_1, z_2]$. The functions $a_0, \sigma_0$ are constant and coincide
with $\bar a, \bar\sigma$ on $(z_1, z_2)$. In addition,
$a_0=a(z_1+)=a(z_2-)$, $\sigma_0=\sigma(z_1+)=\sigma(z_2-)$; hence, by
the choice of $\bar a, \bar\sigma$ we have
\[
{\bar a^2(z)\over\bar\sigma^{2}(z)}\leq{a^2_0(z)\over\sigma
^{2}_0(z)}, \quad z=z_1,
z_2.
\]
Then by \eqref{zero} with $A=\{z_1, z_2\}$ we have
\[
\begin{aligned}\int_0^{\tau^*_{1}}
{(\dot f_t-\bar a(f_t))^2\over\bar
\sigma
^{2}(f_t)}\, dt&=\int_0^{\tau^*_{1}} \biggl(
{(\dot f_t- a_0(f_t))^2\over
\sigma^{2}_0(f_t)}1_{f_t\not\in A}+ {\bar a^2(f_t)\over\bar\sigma
^{2}(f_t)}
1_{f_t \in A} \biggr)\, dt
\\
&\leq\int_0^{\tau^*_{1}} \biggl({(\dot f_t- a_0(f_t))^2\over\sigma
^{2}_0(f_t)}1_{f_t\not\in A}+
{a^2_0(f_t)\over\sigma^{2}_0(f_t)} 1_{f_t \in A} \biggr)\, dt
\\
&=\int_0^{\tau^*_{1}}{(\dot f_t- a_0(f_t))^2\over\sigma
^{2}_0(f_t)}\, dt=\int
_0^{\tau^*_{1}}{(\dot y_t^0- a_0(y_t^0))^2\over
\sigma^{2}_0(y_t^0)}. \end{aligned}
\]
Analogous inequalities hold on each of the time intervals $(\tau^*_{n},
\tau^*_{n+1}),$ $n=1, \dots, r$, with $a_0,\sigma_0$ changed to
$\bar a_{\kappa^*_{n}},\bar\sigma_{\kappa^*_{n}}$ (the proof is
similar and omitted). Thus,
\begin{equation}
\label{compare} %
\begin{aligned} I(f)&\leq{1\over2}\int
_0^{\tau^*_{1}}{(\dot y_t^0-
a_0(y_t^0))^2\over\sigma^{2}_0(y_t^0)}\, dt+
{1\over2}\sum_{n=1}^{r}\int
_{\tau^*_{n}}^{\tau^*_{n+1}}{(\dot y_t^{\kappa^*_{n},n}-
\bar a_{\kappa^*_{n}}(y_t^{\kappa^*_{n},n}))^2\over\bar\sigma
^{2}_{\kappa^*_{n}}(y_t^{\kappa^*_{n},n})}\, dt
\\
&\leq{1\over2} \int_0^T
{(\dot y^0_t- a_0(y_t^0))^2\over\sigma
^{2}_0(y_t^0)}\, dt+{1\over2}\sum
_{k=1}^m\sum_{n=1}^N
\int_0^T{(\dot y^{k,n}_t- \bar a_k(y^{k,n}_t))^2\over\bar\sigma
^{2}_k(y_t^{k,n})}\, dt=J(y).
\end{aligned} %
\end{equation}
This gives a contradiction with inequalities $J(y)\leq c$ and $I(f)>c$,
which completes the proof of the second inequality in \eqref{verify}.

The first inequality in \eqref{verify} holds immediately for $f$ such
that $I(f)=\infty$. We fix $f$ with $I(f)<\infty$ and $\gamma>0$ and
construct $y_\gamma$ such that $F(y_\gamma)=f$, the functions $\tau
_1(\cdot), \tau_2(\cdot), \dots$ are continuous at $y_\gamma$, and
$J(y_\gamma)\leq I(f)+\gamma$. This completes the proof of \eqref{verify}.

The construction explained gives a cue for the choice of $y=y_\gamma$
(we omit the index $\gamma$ to simplify the notation). We put $y^0$
equal to $f$ until its first time moment $\tau_1^*$ of hitting the set
$\{z_1, z_2\}$ (we still assume that $x_0\in(z_1, z_2)$). Then we
extend $y^0$ to the entire time interval $[0,T]$, and we aim to make
the integral
\begin{equation}
\label{defect} \int_{\tau^*_{1}}^T{(\dot y_t^0- a_0(y_t^0))^2\over\sigma
^{2}_0(y_t^0)}
\, dt
\end{equation}
small enough; that is, to make small the error in the second inequality
in \eqref{compare}, which arises because of the integral of $y^0$.
If we put $y^0_t=y_{\tau_1^*}+a_0(t-\tau_1^*)$, then we obtain the
trajectory at which the integral \eqref{defect} equals zero; we call
such a~trajectory a \emph{zero-energy} one. However, under such a
choice, we may fail with the other our requirement that $\tau_1(\cdot)$
should be continuous at the point $y$. It is easy to verify that for
such a continuity, it suffices that $y^0$, if hitting $\{z_1, z_2\}$ at
a point, say, $z_1$ at every interval $(\tau_1^*,\tau_1^*+\delta),\
\delta>0$, takes values both from $(-\infty, z_1)$ and $(z_1, \infty)$.
We can perturb the zero-energy trajectory introduced above on a small
time interval near $\tau_1^*$ in such a way that this new trajectory
possesses the continuity property explained before, and the
integral~\eqref{defect} is~$\leq\gamma/N$.

Then we iterate this procedure. Observe that, for any $k$, by the
construction of the function $\bar a_k$ there exists at least one
corresponding zero-energy trajectory with the initial value $z_k$,
which now is defined as a solution to the ODE
\[
\dot g_t=\bar a_k(g_t), \quad t>0,\quad g_0=z_k.
\]
We have $\kappa^*_1$ uniquely determined by the trajectory $f$ (in
fact, by the part of this trajectory up to time $\tau_1^*$). For
$k\not
=\kappa^*_1$, we define $y^{k,1}$ as the zero-energy trajectory on
$[0,T]$ that starts from $x_k$ and corresponds to the coefficient $\bar
a_k$. All these trajectories are ``phantom'' in the sense that they
neither are involved into the representation of $f$ through $y$ nor
give an impact into $J(y)$. For
$k=\kappa^*_1$, we define $y^{k,1}$ similarly as before: it equals
$f_{t+\tau_1^*}$ for $t\leq\tau_2^*-\tau_1^*$, and afterwards it is
defined as a perturbation of a zero-energy trajectory that makes $\tau
_2(\cdot)$ continuous in $y$ and
\[
\int_{\tau_2^*-\tau^*_{1}}^T{(\dot y_t^{k,1}- \bar
a_k(y_t^{k,1}))^2\over\bar\sigma^{2}_k(y_t^{k,1})}\, dt \leq
{\gamma
\over N}.
\]
Repeating this construction $\leq N$ times, we finally get the required
function \mbox{$y=y_\gamma$}. This completes the proof of \eqref{verify} and
thus of \eqref{wLDP}. Together with the exponential tightness proved in
Section \ref{s31}, this completes the proof of the LDP in this case.

\subsection{Case III. Piecewise constant $a/\sigma^2$, general
$\sigma$}\label{s34} In this section, we remove the assumption on $a,\sigma$
to be piecewise constant, still keeping this assumption for $a/\sigma
^2$; we also assume that $a, \sigma$ are constant on $(-\infty, R]$ and
$[R,\infty)$ for some $R$. Our basic idea is to represent $\{X^\eps\}$
as the image under a time changing transformation of a family $\{Y^\eps
\}$ and then to use the semicontraction principles. The same approach
was used in \cite{kulik-sobolieva}, where the LDP was established for a
solution of \eqref{sde} with $a\equiv0$; in this case, $Y^\eps$ was
taken in the form $Y^\eps_t=x_0+\eps W_t$. In our current setting, the
choice of the coefficients for the SDE that defines $Y^\eps$ should
take into account the common discontinuity points for $a/\sigma$ and
$\sigma$. This becomes visible both from an analysis of the proof of
Theorem 1 in \cite{kulik-sobolieva} and from the definition of the
functions $\bar a, \bar\sigma$, which combines the left- and
right-hand values of both $a$ and~$\sigma$ at the discontinuity points.
The proper choice of the family is explained below. Some parts of the
arguments are similar to those in \cite{kulik-sobolieva}. We omit
detailed proofs
whenever it is possible to give a reference to \cite{kulik-sobolieva}
and focus on the particularly new points.

We assume $a/\sigma^2$ to be piecewise constant with discontinuity
points $z_1<\dots<z_m$ and put (with the convention $\prod_\varnothing=1$)
\[
\tilde\sigma(x)=\prod_{k:z_k\leq x}{\sigma(z_k+)\over\sigma(z_k-)},\qquad
\upsilon(x)={\tilde\sigma(x)\over\sigma(x)}, \qquad
\tilde a(x)=a(x)\upsilon^{2}(x).
\]
Under such a choice, $\tilde\sigma=\sigma\upsilon$, and thus the
function $\tilde a/\tilde\sigma^2$ equals $a/\sigma^2$ and is constant
on each of the intervals $(-\infty, z_1), \dots, (z_m, \infty)$. By
construction, $\tilde\sigma$ is constant on these intervals as well;
hence, $\tilde a, \tilde\sigma$ fit the case studied in the previous
section, and the required LDP holds for the family $Y^\eps$ of the
solutions to \eqref{sde} with these coefficients and $Y^\eps_0=x_0$.
This construction yields also the following property, which will be
important below: the function $a=(a/\sigma^2)\sigma^2$ does not change
its sign on each of the intervals $(-\infty, z_1), \dots, (z_m,
\infty
)$. Hence, denoting
$B=a^2/\sigma^2$ and $\bar B=\bar a^2/\bar\sigma^2$, we get
\begin{equation}
\label{BBbar} \bar B(z)=\min\bigl(B(z-), B(z+)\bigr)=\min\bigl(\bar B(z-), \bar
B(z+)\bigr), \quad z\not \in\{z_k\}.
\end{equation}

Fix $\eps>0$ and define
\[
\eta_t=\int_0^t
\upsilon^2 \bigl(Y^\eps_s \bigr)\, ds, \quad t
\geq0,
\]
$\tau_t=[\eta]^{-1}_t$ (the inverse function w.r.t. $t$), and $X^\eps
_t=Y^\eps_{\tau_t}$. Then $X^\eps$ is a weak solution to \eqref{sde}
with $X_0^\eps=x_0$; see \cite{watanabe-ikeda}, IV \S7.

In the above construction, $\eta_t\geq c^2 t$ and thus $\tau_t\leq
c^{-2}t$; see \eqref{coef}. We put $\tilde T=c^{-2}T$, $\Y=C(0,
\tilde
T)$, and
define $Y^\eps$ as a family of solutions to \eqref{sde} with the
coefficients $\tilde a, \tilde\sigma$ and the time horizon $\tilde T$.
Then the family $X^\eps$ possesses a~representation $X^\eps=F(Y^\eps)$
with the mapping $F:\Y\to\X$ defined by
\[
\bigl[F(y) \bigr]_t=y_{\tau_t(y)}, \qquad\tau_t(y)=
\bigl[\eta(y) \bigr]^{-1}_t, \quad t\in[0, T],
\]
\[
\eta_t(y)=\int_0^t
\upsilon^2(y_s)\, ds,\quad t\in[0, \tilde T].
\]
Observe that for $F$ to be continuous at a point $y\in\Y$, it suffices
that $y$ spends zero time in the set $\varDelta_\upsilon$ of the
discontinuity points of the function $\upsilon$; see \cite
{kulik-sobolieva}, Lemma 1 and Corollary 1. Now $\varDelta_\upsilon
\subset
\varDelta_a\cup\varDelta_\sigma$ is at most countable, and it is easy to see
that the continuity set $\varLambda_F$ has probability 1 w.r.t.\ the
distribution of each $Y^\eps$, that is, we can apply Lemma \ref{l1}.

Our further aim is to prove \eqref{verify} in the above setting, which
then would imply \eqref{wLDP} and thus prove the LDP. The general idea
of the proof is similar to that of Theorem 1 in \cite{kulik-sobolieva},
though particular technicalities differ substantially.

First, for a given $f\in\X$, we describe explicitly the set $F^{-1}(\{
f\})$. We put
\[
\zeta_t(f)=\int_0^t
\upsilon^{2}(f_s)\, ds,\quad t\in[0, T].
\]
If $f=F(y)$, then
\[
\zeta_t(f)=\int_0^t
\upsilon^{2}(y_{\tau_{s}(y)})\, ds=\int_0^{\tau
_t(y)}
\upsilon^{2}(y_r) \upsilon^{-2}(y_r)
\, dr=\tau_t(y), \quad t\in[0, T];
\]
here we changed the variables $r=\tau_s(y)$ and used that
\[
dr=\tau_s'(y)\,ds={1\over\upsilon^2(y_{\tau_s(y)})}\,ds=
{1\over\upsilon
^2(y_{r})}\, ds.
\]
Therefore,
\[
f_t=y_{\tau_t(y)}=y_{\zeta_t(f)}, \quad t\in[0, T].
\]
Observe that $\zeta_T(f)\leq c^{-2}T=\tilde T$ and define
\[
\pi_t(f)= \bigl[\zeta(f) \bigr]^{-1}_t=\inf
\bigl\{r: \zeta_r(f)\geq t \bigr\}, \quad t\in \bigl[0,
\zeta_T(f) \bigr].
\]
Then we conclude that
\begin{equation}
\label{resolve} y_t=f_{\pi_t(f)}, \quad t\in\bigl[0,
\zeta_T(f)\bigr];
\end{equation}
that is, for any $y\in F^{-1}(\{f\})$, the part of its trajectory with
$t\leq\zeta_T(f)$ is uniquely defined. On the other hand, it is easy
to show that \emph{any} $y\in\Y$ satisfying \eqref{resolve} belongs to
$F^{-1}(\{f\})$.

Next, we denote by $\hat a, \hat\sigma$ the modified coefficients,
which correspond to the coefficients $\tilde a, \tilde\sigma$ in the
sense explained in Section \ref{s1}. Since $\tilde a=a \upsilon^2$ and
$ \tilde\sigma=\sigma\upsilon$, we easily see that
\begin{equation}
\label{eq} \hat a(x)=\bar a(x)\upsilon^2(x),\qquad\hat\sigma(x)=
\bar\sigma (x)\upsilon(x)
\end{equation}
at every continuity point $x$ for $\upsilon$. Then, for any $y\in \mathit{AC}(0,
\tilde T)$ with $y_0=x_0$ that spends zero time in the set $\varDelta
_\upsilon$, we have
\[
J(y)={1\over2}\int_0^{\tilde T}
{(\dot y_t-\hat a(y_t))^2\over\hat
\sigma^{2}(y_t)}\, dt={1\over2}\int_0^{\tilde T}
{(\dot y_t-\bar
a(y_t)\upsilon^{2}(y_t) )^2\over\bar\sigma^{2}(y_t)\upsilon
^{2}(y_t)}\, dt.
\]
On the other hand, using \eqref{resolve} and making the time change
$s=\pi_t(f)$ with $f=F(y)$, we get
\[
\begin{aligned}
&
{1\over2}\int_0^{\zeta_T(F(y))}
{(\dot y_t-\bar a(y_t)\upsilon
^{2}(y_t) )^2\over\bar\sigma^{2}(y_t)\upsilon^{2}(y_t)}\, dt
\\
&\quad{}= {1\over2}\int_0^{\zeta_T(F(y))}
{(\dot y_t\upsilon^{-2}(y_t)-\bar
a(y_t))^2\over\bar\sigma^{2}(y_t)}\upsilon^{2}(y_t)\, dt
\\
&\quad{}={1\over2}\int_0^{T}
{(\dot f_s-\bar a(f_s))^2\over\bar\sigma
^{2}(f_s)}\, ds=I \bigl(F(y) \bigr)
\end{aligned} %
\]
because now $t=\zeta_s(f)=\tau_s(y)$ and thus
\[
dt={ds\over\upsilon^{2}(y_t)}, \qquad\dot y_t\upsilon
^{-2}(y_t)=(y_{\tau_s(y)})'_s=
\dot f_s.
\]
Thus,
\begin{equation}
\label{Jdec} J(y)=I\bigl(F(y)\bigr)+J^{\mathit{tail}}(y)
\end{equation}
with
\[
J^{\mathit{tail}}(y)={1\over2}\int_{\zeta_T(F(y))}^{\tilde T}
{(\dot y_t-\hat
a(y_t))^2\over\hat\sigma^{2}(y_t)}\, dt.
\]

Now we are ready to proceed with the proof of the first inequality in
\eqref{verify}.
\begin{lem} For any $f\in C(0,T)$,
\begin{equation}
\label{low} I_{\mathit{lower}}(f)\leq I(f).
\end{equation}
\end{lem}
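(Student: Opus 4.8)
The plan is to prove the bound $I_{\mathit{lower}}(f)\le I(f)$ by exhibiting, for every $f$ with $I(f)<\infty$ (the case $I(f)=\infty$ being trivial), a single point $y\in\varLambda_F$ with $F(y)=f$ and $J(y)=I(f)$; since $\varTheta_\delta(f)$ contains every such $y$ for all $\delta>0$, this gives $I_{\mathit{lower}}(f)=\lim_{\delta\to0}\inf_{y\in\varTheta_\delta(f)}J(y)\le J(y)=I(f)$. The natural candidate is dictated by the analysis preceding the lemma: take $y$ to agree with the forced part $y_t=f_{\pi_t(f)}$ on $[0,\zeta_T(f)]$, as in \eqref{resolve}, and then extend it on the remaining interval $[\zeta_T(f),\tilde T]$ so that the tail contribution $J^{\mathit{tail}}(y)$ vanishes and so that $y$ still spends zero time in $\varDelta_\upsilon$ (so that $F$ is continuous at $y$). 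By the decomposition \eqref{Jdec}, any such extension satisfies $J(y)=I(F(y))+J^{\mathit{tail}}(y)=I(f)+J^{\mathit{tail}}(y)$, so it suffices to drive the tail integral to zero (or, if one cannot make it exactly zero while staying in $\varLambda_F$, to make it $\le\gamma$ for arbitrary $\gamma>0$ and then let $\gamma\to0$).

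First I would check that the forced part is genuinely in $\mathit{AC}(0,\tilde T)$ on $[0,\zeta_T(f)]$ with the right initial value, and that it spends zero time in $\varDelta_\upsilon$: since $I(f)<\infty$ forces $f\in\mathit{AC}(0,T)$ with $f_0=x_0$, and $\varDelta_\upsilon$ has zero Lebesgue measure, \eqref{zero} applied with $A=\varDelta_\upsilon$ shows $f$ spends zero time in $\varDelta_\upsilon$ on the set where $\dot f\ne0$; a small additional argument (the time change $s\mapsto\zeta_s(f)$ being absolutely continuous with bounded derivative, and $\upsilon$ being bounded away from zero and infinity) transfers this to $y$ on $[0,\zeta_T(f)]$. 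For the extension on $[\zeta_T(f),\tilde T]$, the zero-energy choice is the solution of the ODE $\dot g_t=\hat a(g_t)$ started from $y_{\zeta_T(f)}$; this makes $J^{\mathit{tail}}(y)=0$. The one subtlety — exactly the kind already encountered in Case II — is that such an ODE solution might linger on a discontinuity point of $\upsilon$ (hence of $\hat a,\hat\sigma$), which would put $y$ outside $\varLambda_F$. As there, I would perturb the zero-energy tail on arbitrarily small time subintervals so that $y$ spends zero time in $\varDelta_\upsilon$ while keeping $J^{\mathit{tail}}(y)\le\gamma$; then $J(y)\le I(f)+\gamma$, and letting $\gamma\to0$ over a sequence of such $y$'s (all lying in $\varTheta_\delta(f)$) yields \eqref{low}.

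The main obstacle I expect is precisely this continuity-repair step: ensuring that the chosen $y$ lies in $\varLambda_F$, i.e.\ spends no time in $\varDelta_\upsilon$, without inflating $J(y)$. On the forced portion $[0,\zeta_T(f)]$ this is automatic from the finiteness of $I(f)$ via \eqref{zero}, but on the free tail one has genuine freedom in choosing $y$ and must use it carefully — the zero-energy ODE solution is the energy-optimal choice but need not avoid $\varDelta_\upsilon$, so one has to interpolate between ``optimal but possibly bad'' and ``continuity-respecting but slightly costly,'' exactly the trade-off handled in Case II. Everything else (the change-of-variables identities, the bound $\zeta_T(f)\le\tilde T$, measurability of $\varLambda_F$, the lower semicontinuity of $I_{\mathit{lower}}$) is either already established in the excerpt or a routine computation. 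A secondary point worth a line is that one should confirm $y_{\zeta_T(f)}$ is well defined even when $\zeta_T(f)<\tilde T$ strictly, i.e.\ that the forced part extends continuously to the right endpoint — but this follows from $y=f\circ\pi(f)$ with $\pi(f)$ continuous and $f$ continuous.
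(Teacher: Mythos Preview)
Your diagnosis of where the difficulty lies is inverted, and this produces a genuine gap. You claim that on the forced portion $[0,\zeta_T(f)]$ the condition ``$y$ spends zero time in $\varDelta_\upsilon$'' is automatic from $I(f)<\infty$ via \eqref{zero}. It is not. The identity \eqref{zero} only controls the set $\{t:\dot f_t\neq 0\}$; nothing prevents $f$ from sitting, with $\dot f=0$, at a point $w\in\varDelta_\upsilon\setminus\{z_k\}$ on a time interval of positive length, and such an $f$ can certainly satisfy $I(f)<\infty$ (the integrand there is just $\bar a(w)^2/\bar\sigma^2(w)<\infty$). The time change has derivative bounded away from $0$ and $\infty$, so the corresponding $y$ also spends positive time at $w$ on the forced segment. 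For that $y$ you neither have $y\in\varLambda_F$ nor the right to invoke \eqref{Jdec}, and your perturbations of the \emph{tail} cannot repair this.

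Conversely, the tail is in fact the easy part in this setting, and no perturbation is needed there. Because $\tilde a$ is piecewise constant, so is $\hat a$; a zero-energy trajectory $\dot g=\hat a(g)$ is therefore piecewise linear. On linear pieces with nonzero slope it spends zero time in the null set $\varDelta_\upsilon$, and on constant pieces it necessarily equals some $z_k$, where by construction $\upsilon$ is continuous (the jump of $\tilde\sigma$ at $z_k$ exactly cancels that of $\sigma$). So $J^{\mathit{tail}}(y)=0$ and the tail already lies in the continuity set. The real work, which you are missing, is to handle general $f$ that does spend positive time in $\varDelta_\upsilon$: the paper does this by approximating $f$ itself by functions $f^\delta\in B_\delta(f)$ with $I(f^\delta)\le I(f)+\delta$ that spend zero time in $\varDelta_\upsilon$, using the decomposition $I=I_1+I_2+I_3$, the key identity \eqref{BBbar}, and the lower semicontinuity of $I_{\mathit{lower}}$. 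That approximation step is nontrivial and is absent from your outline.
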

\begin{proof} We consider only $f$ such that $I(f)<\infty$; otherwise,
the required inequality is trivial. Let us fix a function $y$
corresponding to $f$ by the following convention: it is given by
identity \eqref{resolve} up to the time moment $t=\zeta_T(f)$ and
follows a zero-energy trajectory afterward, that is, satisfies
\[
\dot y_t=\hat a(y_t)
\]
a.e.\ w.r.t.\ to the Lebesgue measure. We note that at least one such
zero-energy trajectory exists (it may be nonunique, and in this case,
we just fix one of such trajectories). Indeed, by construction, $\tilde
a$ is piecewise constant, so that the corresponding $\hat a$ is
piecewise constant as well. The proper choice of $\hat a(z_k)$ at those
points $z_k$ where $\tilde a(z_k-)>0, \tilde a(z_k+)<0$ yields that the
above ODE, which determines a zero-energy trajectory, admits at least
one solution.

If $f$ spends zero time in the set $\varDelta_\upsilon$ of discontinuity
points for $\upsilon$, then the same property holds for the
corresponding $y$ constructed above. Indeed, the first part of the
trajectory $y$ is just the time-changed trajectory $f$, and the second
part is a zero-energy trajectory. The latter trajectory is piecewise
linear, and we can separate a finite set of time intervals where it
either (a) moves with a constant speed $\not=0$ (and thus spends a zero
time in the set $\varDelta_\upsilon$, which has zero Lebesgue measure) or
(b) stays constant (in this case, it equals $z_k$ for some $k$, and, by
construction, $\upsilon$ is continuous at $\{z_k\}$). Hence, we
conclude that \eqref{Jdec} holds and, moreover, $J^{\mathit{tail}}(y)=0$, that
is, $J(y)=I(f)$. In addition, $y\in\varLambda_F$, which gives for this
$f$ the required inequality
\[
I_{\mathit{lower}}(f)\leq I(f).
\]

For a general $f$, we will show that, for each $\delta>0$, there exists
$f^\delta$ such that $f^\delta\in B_\delta(f)$, $I(f^\delta)\leq
I(f)+\delta$, and $f^\delta$ spends zero time in $\varDelta_\upsilon$;
since $I_{\mathit{lower}}$ is known to be lower semicontinuous, this will
complete the proof of the first inequality in \eqref{verify}. Recall
the decomposition $I=I_1+I_2+I_3$ from Section~\ref{s22} and note that
$I_2(f^n)\to I_2(f)$ if $f^n\to f$ in the uniform distance and
$I_1(f^n)\to I_1(f)$ if $f^n\to f$ in the distance
\[
d_\varSigma \bigl(f^1,f^2 \bigr)= \Biggl(\int_0^T
\bigl| \bigl({\varSigma \bigl(f_t^1
\bigr)} \bigr)'- \bigl({\varSigma \bigl(f_t^2
\bigr)} \bigr)'\bigr|^2\, dt \Biggr)^{1/2}.
\]
Hence, our aim is to construct a function $f^\delta$ that is close to
$f$ both in the uniform distance and in $d_\varSigma$, spends zero time in
the set $\varDelta_\upsilon$, and
\[
I_3 \bigl(f^\delta \bigr)\leq I_3(f)+\delta/3.
\]

We decompose the time set
\[
Q= \bigl\{t\in(0, T): f_t\not\in\{z_k\} \bigr\}
\]
into a disjoint union of open intervals and modify the function $x$ on
each of these intervals. On the complement to this union, the function
$f^\delta$ will remain the same; note that $\upsilon$ is continuous at
every point $z_k$, and hence in order to get a function that spends
zero time in $\varDelta_\upsilon$, it suffices to modify $f$ on $Q$ only.
In what follows, we fix an interval $(\alpha, \beta)$ from the
decomposition of the set $Q$ and describe the way to modify $f$ on
$(\alpha, \beta)$. The construction below is mostly motivated by
\eqref{BBbar}.
We fix some $\gamma>0$ and choose a finite partition $\{u_j\}$ of the
set $\{f_t, t\in[\alpha, \beta]\}$ such that the oscillation of the
function $\bar\sigma^2$ on each interval $(u_{j-1}, u_j)$ does not
exceed $\gamma$. Then there exists a finite partition $\alpha
=t_0<\dots
<t_m=\beta$ such that, on each time segment
$[t_{i-1}, t_i]$, the function~$x$ visits at most one point from the
set $\{u_j\}$. Then, on each interval $[t_{i-1}, t_i]$, we consider the
family
\[
f^{i,\kappa}_t=f_t+s_i\kappa
\phi_t^i,\quad\kappa>0,
\]
where $\phi^i$ is a function such that
\[
\phi_{t_{i-1}}^{i}=\phi_{t_{i}}^{i}=0, \qquad
\int_{t_{i-1}}^{t_i} \bigl(\dot\phi^i_t
\bigr)^2\, dt<\infty,
\]
and $s_i$ is defined by the following convention: $s_i=+1$ if $\bar B$
is right-continuous at the (unique) point from the set $\{u_j\}$ that
is visited by $f$ on $[t_{i-1}, t_i]$ or if $f$ does not visit this
set; otherwise, $s_i=-1$. If, in addition, $\dot\phi^i_t\not=0$ a.e.,
then for all $\kappa>0$ except at most countable set of points, we have
that $f^{i,\kappa}$ spends zero time in $\varDelta_\upsilon$ on the time
interval; see \cite{kulik-sobolieva}, Lemma 2. The choice of the sign
$s_i$ yields that, for $\kappa>0$ small enough,
\[
B \bigl(f^{i,\kappa}_t \bigr)\leq\bar B(x_t)+\gamma,
\quad t\in[t_{i-1}, t_i]\cap\varDelta_{B} .
\]
Then $\kappa>0$ can be chosen small enough and the same for all
intervals $[t_{i-1}, t_i]$, so that the corresponding function $\tilde
f^{\kappa}$, which coincides with $f^{i,\kappa}$ on $[t_{i-1}, t_i]$,
satisfies
\[
\int_\alpha^\beta\bar B \bigl(\tilde
x^{\kappa}_t \bigr)\, dt\leq\int_\alpha
^\beta\bar B(x_t)\, dt+2\gamma(\beta-\alpha).
\]
It is also easy to see that, in addition, the following inequalities
can be guaranteed by the choice of (small) $\kappa$:
\[
\sup_{t\in(\alpha, \beta)}
\bigl|\tilde f^{\kappa}_t-x_t\bigr|
\leq\gamma(\beta-\alpha), \qquad\int_\alpha^\beta\bigl|
\bigl({\varSigma \bigl(f^{\kappa}_t \bigr)} \bigr)'-
\bigl({\varSigma(f_t)} \bigr)'\bigr|^2\, dt\leq
\gamma(\beta-\alpha).
\]
Repeating the same construction on each interval from the partition for
$Q$, we get a function $\tilde f$ such that
\[
\|\tilde f-f\|\leq\gamma T, \qquad
d_\varSigma^2(\tilde f, f)\leq\gamma T, \qquad
I_3(\tilde f)\leq I_3(f)+\gamma T,
\]
and $\tilde f$ spends zero time in $\varDelta_\upsilon$. Taking in this
construction $\gamma>0$ small enough, we obtain the required function
$f^\delta=\tilde f$, which completes the proof of \eqref{low}.
\end{proof}
Recall that $B$ and $\bar B$ satisfy \eqref{BBbar}.
For the similar pair of functions $\tilde B=\tilde a^2/\tilde\sigma^2$
and $\hat B=\hat a^2/\hat\sigma^2$, we have even more: the functions
$\hat a, \hat\sigma$ are constant on each of the intervals $(-\infty,
z_1), \dots, (z_m, \infty)$; hence,
\[
\hat B(z)=\tilde B(z), \quad z\not\in\{z_k\}.
\]
On the other hand, since $\tilde a=a \upsilon^2$ and $\tilde\sigma
=\sigma\upsilon$, we have $B=\tilde B\upsilon^{-2}$, and thus
\[
B(z)=\hat B(z)\upsilon^{-2}(z), \quad z\not\in\{z_k\}.
\]
This yields, for $z\not\in\{z_k\}$,
\begin{equation}
\label{wow} \bar B(z)=\hat B(z)\min\bigl(\upsilon^{-2}(z-),
\upsilon^{-2}(z+)\bigr).
\end{equation}
Recall that $\upsilon$ is continuous at each point $z_k$; hence, by
\eqref{eq} identity \eqref{wow} holds for all $z\in\Re$.

Now we are ready to proceed with the proof of the second inequality in
\eqref{verify}.
\begin{lem} For any $f\in C(0,T)$,
\begin{equation}
\label{up} I(f)\leq I_{\mathit{upper}}(f).
\end{equation}
\end{lem}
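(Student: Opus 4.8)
The plan is to reduce, using that $J$ is a good rate function, to proving $I(f)\le J(y^*)$ for a single limit point $y^*$, and then to establish this by a time change built from the identities \eqref{eq}, \eqref{wow} and the change-of-variables fact \eqref{zero}. If $I_{\mathit{upper}}(f)=\infty$ there is nothing to prove, so fix $c>I_{\mathit{upper}}(f)$. Unwinding the nested-limit formula for $I_{\mathit{upper}}$ and using monotonicity of $\inf_{y\in\varXi_{\gamma,\delta}(f)}J(y)$ in $\gamma$, we obtain $\delta_l\downarrow 0$, $\gamma_l\downarrow 0$ and $y_l\in\varXi_{\gamma_l,\delta_l}(f)$ with $J(y_l)<c$; pick $y_l'\in\varTheta_{\delta_l}(f)$ with $\rho_{\Y}(y_l,y_l')<\gamma_l$, so that $y_l'\in\varLambda_F$ and $\rho_{\X}(F(y_l'),f)<\delta_l$. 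Since $\{J\le c\}$ is compact, after passing to a subsequence $y_l\to y^*$ with $J(y^*)\le c$, hence also $y_l'\to y^*$ in $\Y$ and $F(y_l')\to f$ in $\X$, with $y^*\in \mathit{AC}(0,\tilde T)$ and $y^*_0=x_0$. It then suffices to prove $I(f)\le J(y^*)$.

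Next I extract a limiting time change. Since $\dot\tau_s(y)=\upsilon^{-2}([F(y)]_s)$, the functions $h^l_s:=\upsilon^{-2}(F(y_l')_s)$ satisfy $\tau_t(y_l')=\int_0^t h^l_s\,ds$ and are bounded above and below by positive constants; along a further subsequence $h^l\rightharpoonup h^*$ weakly in $L^2(0,T)$, with $h^*$ in the same bounds. Put $\tau^*_t=\int_0^t h^*_s\,ds$. Then $\tau_t(y_l')\to\tau^*_t$ uniformly, $\tau^*$ is strictly increasing and bi-Lipschitz with $\tau^*_T\le\tilde T$, and using equicontinuity of the convergent family $\{y_l'\}$ in $F(y_l')_t=(y_l')_{\tau_t(y_l')}$ one gets $f_t=y^*_{\tau^*_t}$ for $t\in[0,T]$. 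The rate $h^*$ is pinned down thus: for a.e.\ $t$ with $f_t\not\in\varDelta_\upsilon$, $h^l_t\to\upsilon^{-2}(f_t)$ by continuity of $\upsilon^{-2}$ there, so $h^*_t=\upsilon^{-2}(f_t)$; for $t$ with $f_t\in\varDelta_\upsilon$ the values $F(y_l')_t$ accumulate at $f_t$, so $\liminf_l h^l_t\ge\min(\upsilon^{-2}(f_t-),\upsilon^{-2}(f_t+))$, and Fatou's lemma upgrades this to $h^*_t\ge\min(\upsilon^{-2}(f_t-),\upsilon^{-2}(f_t+))$ a.e.\ on $\{t:f_t\in\varDelta_\upsilon\}$. (Here one first modifies $\sigma$ at its countably many discontinuity points to one-sided limit values — which changes neither the laws of $X^\eps,Y^\eps$ nor the modified coefficients — so that $\upsilon^{-2}(w)\ge\min(\upsilon^{-2}(w-),\upsilon^{-2}(w+))$ for every $w$ and the exceptional set $\{F(y_l')_t=f_t\}$ causes no trouble.)

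Finally I substitute $s=\tau^*_t$ in $I(f)$. Since $\tau^*$ is bi-Lipschitz and strictly increasing and $y^*\in \mathit{AC}$, $f=y^*\circ\tau^*$ is absolutely continuous with $\dot f_t=\dot y^*_{\tau^*_t}h^*_t$, and $[0,T]$ is carried onto $[0,\tau^*_T]\subset[0,\tilde T]$, sending $\{f_t\not\in\varDelta_\upsilon\}$ and $\{f_t\in\varDelta_\upsilon\}$ to the analogous sets for $y^*$. On $\{f_t\not\in\varDelta_\upsilon\}$ we have $h^*_t=\upsilon^{-2}(y^*_{\tau^*_t})$, so the substitution together with \eqref{eq} turns the corresponding part of $I(f)$ into $\frac12\int_{\{y^*_s\not\in\varDelta_\upsilon,\ s\le\tau^*_T\}}\frac{(\dot y^*_s-\hat a(y^*_s))^2}{\hat\sigma^2(y^*_s)}\,ds$, exactly as in the computation preceding \eqref{Jdec}. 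On $\{f_t\in\varDelta_\upsilon\}$, \eqref{zero} with $A=\varDelta_\upsilon$ gives $\dot f_t=0$ a.e., and $f_t\not\in\{z_k\}$ because $\upsilon$ is continuous at the $z_k$, so the integrand is $\bar B(f_t)=\hat B(f_t)\min(\upsilon^{-2}(f_t-),\upsilon^{-2}(f_t+))$ by \eqref{wow}; the lower bound on $h^*$ together with \eqref{zero} applied to $y^*$ then bounds this part of $I(f)$ by $\frac12\int_{\{y^*_s\in\varDelta_\upsilon,\ s\le\tau^*_T\}}\hat B(y^*_s)\,ds=\frac12\int_{\{y^*_s\in\varDelta_\upsilon,\ s\le\tau^*_T\}}\frac{(\dot y^*_s-\hat a(y^*_s))^2}{\hat\sigma^2(y^*_s)}\,ds$. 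Adding the two pieces and enlarging $[0,\tau^*_T]$ to $[0,\tilde T]$ yields $I(f)\le J(y^*)\le c$; since $c>I_{\mathit{upper}}(f)$ was arbitrary, \eqref{up} follows.

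The delicate step is the middle one: producing the limiting time change $f_t=y^*_{\tau^*_t}$ and, above all, the sharp a.e.\ lower bound on its rate on the set where $f$ dwells in $\varDelta_\upsilon$. This is exactly where \eqref{wow} and \eqref{eq} are tailored so that the change-of-variables inequality closes, absorbing the discrepancy caused by the fact that $F$ need not be continuous at $y^*$ (so that $F(y^*)$ may differ from $f$). Everything else is the measure-zero bookkeeping of \eqref{zero} and a routine weak-compactness argument, parallel to the exact identity established before \eqref{Jdec}.
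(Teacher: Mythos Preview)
Your argument is correct and follows the same route as the paper: extract a limit $y^*$ of the approximating sequence together with a limiting time change $\tau^*$ (the paper's $\tau$ with the bounds \eqref{dot}), obtain $f_t=y^*_{\tau^*_t}$, and then compare $I(f)$ with $J(y^*)$ using \eqref{eq}, \eqref{wow}, and \eqref{zero}. The only cosmetic difference is that the paper splits via the decomposition $I=I_1+I_2+I_3$ of Section~\ref{s22} and compares term by term, whereas you substitute directly in the integrand and split according to whether $f_t\in\varDelta_\upsilon$; the two computations are equivalent, and you in fact give more detail than the paper on why the a.e.\ bounds \eqref{dot} hold for the weak limit of the time-change rates.
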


\begin{proof} Assuming \eqref{up} to fail for some $f$, we will have
sequences $\{y^n\}, \{\tilde y^n\}$ such that $\{\tilde y_n\}\subset
\varLambda_F$,
\[
F \bigl(\tilde y^n \bigr)\to f, \qquad
\bigl\|y^n-\tilde y^n\bigr\|\to0, \quad\mbox{and}\quad\limsup_n J
\bigl(y^n \bigr)<I(f).
\]
Then $\{y^n\}$ belongs to some level set $\{J(y)\leq c\}$ of a good
rate function $J$. Hence, passing to a subsequence, we can assume that
both $\{y^n\}$ and $\{\tilde y^n\}$ converge to some $y\in\Y$. In
addition, $J(y)\leq\liminf_n J(y^n)<I(f)$.

Next, denote
\[
\tau_t^n=\tau_t \bigl(\tilde y^n
\bigr),
\]
where $\tau(\cdot)$ is the function introduced in the definition of
$F$. Then each $\tau^n\in \mathit{AC}(0, T)$ with its derivative taking values
from $[C^{-2}, c^{-2}]$; see \eqref{coef}. This allows us, passing to a
subsequence, assume that there exists a uniform limit $\tau=\lim_{n}\tau
^n$ and that $\dot\tau^n\to\dot\tau$ weakly in $L_2(0,T)$.

Observe that
\[
\sup_{t\in[0, T]}
\bigl|F \bigl(\tilde y^n\bigr)_t-y_{\tau^n_t}\bigr|
=
\sup_{t\in[0,T]}
\bigl|\tilde y^n_{\tau^n_t}-y_{\tau^n_t}\bigr|
\to0.
\]
Thus,
\[
f_t=y_{\tau_t}, \quad t\in[0,T].
\]
Then we have a representation for the part of the trajectory $y$
similar to \eqref{resolve}:
\begin{equation}
\label{resolveY} y_s=f_{\eta_s}, \qquad
s\in[0,\tau_T],\qquad
\eta:=\tau^{-1}.
\end{equation}
We observe that
\[
J(y)\geq{1\over2}\int_0^{\tau_T}
{(\dot y_s-\hat a(y_s))^2\over\hat
\sigma^{2}(y_s)}\, ds
\]
and give a decomposition for the latter integral, similar to \eqref
{repI}. Recall that the function $\hat a/\hat\sigma^2$ coincides with
$\bar a/\bar\sigma^2$ at each point except the finite set $\{z_k\}$. Then
\[
S(x)=\int_0^x {\bar a(z)\over\bar\sigma^{2}(z)}\, dz=\int
_0^x {\hat
a(z)\over\hat\sigma^{2}(z)}\, dz, \quad x\in
\Re,
\]
and thus
\begin{equation}
\label{repJ} J(y)\geq {1\over2}\int_0^{\tau_T}
{(\dot y_s)^2\over\hat\sigma^{2}(y_s)}\, ds+ \bigl[S(y_{\tau_T})-S(y_0)
\bigr]+{1\over2}\int_0^{\tau_T}
{\hat
a^2(y_s)\over\hat\sigma^{2}(y_s)}\, ds=:J_1+J_2+J_3.
\end{equation}
Now we will use \eqref{resolveY} in order to compare $J_i, i=1,2,3$,
with $I_i(f), i=1,2,3$. We have directly that $J_2=I_2(f)$. Next, we
change the variables $s=\tau_t$,\ and get
\[
J_3={1\over2}\int_0^{T}
{\hat a^2(f_t)\over\hat\sigma^{2}(f_t)}\dot\tau_t\, dt.
\]
Recall that we assumed $\dot\tau$ to be the $L_2$-weak limit of
\[
\dot\tau_t^n={1\over\upsilon^2(\tilde y^n_{\tau_t^n})}.
\]
On the other hand, $\tilde y^n_{\tau_t^n}\to f_t$, and then it is easy
to show that, for a.a. $t\in[0,T]$,
\begin{equation}
\label{dot} {1\over\max(\upsilon^2(f_t-), \upsilon^2(f_t+))}\leq\dot\tau _t\leq
{1\over\min(\upsilon^2(f_t-), \upsilon^2(f_t+))}.
\end{equation}
Then by \eqref{wow} we get
\[
J_3\geq{1\over2}\int_0^{T}
{\hat a^2(f_t)\over\hat\sigma^{2}(f_t)} {1\over\max(\upsilon
^2(f_t-), \upsilon^2(f_t+))}\, dt={1\over2}
\int_0^{T} \bar B(f_t)\,
dt=I_3(f).
\]
Finally, changing the variables $s=\tau_t$, we get
\[
J_1={1\over2}\int_0^{T}
{(\dot f_t)^2\over\hat\sigma^{2}(f_t)\dot\tau
_t}\, dt.
\]
Denote $Q=\{t\in[0,T]: f_t\in\varDelta_\upsilon\}$\ and recall that
because $\varDelta_\upsilon$ has zero Lebesgue measure, $\dot f_t=0$ for
a.a. $t\in Q$. On the other hand, if $f_t\not\in\varDelta_\upsilon,$
then by \eqref{eq} and \eqref{dot} we have
\[
{1\over\hat\sigma^{2}(f_t)\dot\tau_t}={1\over\bar\sigma^{2}(f_t)};
\]
thus,
\[
J_1={1\over2}\int_{[0,T]\setminus Q}
{(\dot f_t)^2\over\hat\sigma
^{2}(f_t)\dot\tau_t}\, dt={1\over2}\int_{[0,T]\setminus Q}
{(\dot
f_t)^2\over\bar\sigma^{2}(f_t)}\, dt=I_1(f).
\]
Summarizing the above, we get $J(y)\geq I(f)$, which contradicts to the
assumption made at the beginning of the proof.
\end{proof}

\subsection{Completion of the proof: general $a, \sigma$}\label{s35}
In this last part of the proof, we remove the assumption $a/\sigma^2$
to be piecewise constant and prove the required statement in the full
generality. According to Section \ref{s21}, it suffices to prove that, for
fixed $f\in C(0, T)$ and $\varkappa>0$,
\begin{equation}
\label{wLDPup} \limsup_{\eps\to0}\eps^{2}\log\P
\bigl(X^\eps\in B_\delta(f)\bigr)\leq -I(f)+\varkappa
\end{equation}
for {some} $\delta>0$ and
\begin{equation}
\label{wLDPlow} \liminf_{\eps\to0}\eps^{2}\log\P
\bigl(X^\eps\in B_\delta(f)\bigr)\geq -I(f)-\varkappa
\end{equation}
for {each} $\delta>0$. While doing that, we can and will assume that
$a, \sigma$ are constant on $(-\infty, R]$ and $[R,\infty)$ for some $R$.

Consider, together with the original SDE \eqref{sde}, the SDE
\begin{equation}
\label{sde2} dY_t^\eps=\bigl[a\bigl(Y_t^\eps
\bigr)+\alpha\bigl(Y_t^\eps\bigr)\sigma\bigl(Y^\eps_t
\bigr)\bigr]\, dt+\eps \sigma\bigl(Y^\eps_t\bigr)
dW_t, \quad Y^\eps_0=x_0\in\Re,
\end{equation}
where $\alpha$ is a bounded function to be specified later. Then by the
Girsanov theorem
\[
\P \bigl(X^\eps_\cdot\in A \bigr)=\E1_{Y^\eps\in A}
\mathcal{E}_T^\eps,
\]
\[
\mathcal{E}_T^\eps:=\exp \Biggl[-\eps^{-1}\int
_0^T\alpha \bigl(Y_t^\eps
\bigr)\, dW_s-{\eps^{-2}\over2}\int_0^T
\alpha^2 \bigl(Y_t^\eps \bigr)\, ds \Biggr];
\]
see \cite{watanabe-ikeda}, Chapter IV, Theorem 4.2. Then, for arbitrary
$p,q>1: 1/p+1/q=1$, we have
\[
\P \bigl(X^\eps_\cdot\in A \bigr)\leq\P
\bigl(Y^\eps_\cdot \in A \bigr)^{1/p} \bigl( \E
\bigl(\mathcal{E}_T^\eps \bigr)^q
\bigr)^{1/q}.
\]
Let $|\alpha(x)|\leq\gamma$. Then we have
\[
\begin{aligned}\E \bigl(\mathcal{E}_T^\eps
\bigr)^q&=\E \exp \Biggl[-q\eps^{-1}\int
_0^T\alpha \bigl(Y_t^\eps
\bigr)\, dW_s-q {\eps^{-2}\over2}\int_0^T
\alpha^2 \bigl(Y_t^\eps \bigr)\, ds \Biggr]
\\
&=\E\exp \Biggl[ \bigl(q^2-q \bigr){\eps^{-2}\over2}\int
_0^T\alpha^2 \bigl(Y_t^\eps
\bigr)\, ds \Biggr] \mathcal{E}_T^{q\eps}\leq e^{(q^2-q)\gamma
^2\eps^{-2}/2}
\E\, \mathcal{E}_T^{q\eps}. \end{aligned} %
\]
Since $\alpha$ is bounded, $\mathcal{E}^{q\eps}$ is a martingale. Thus,
we can summarize the above calculation as follows:
\[
\log\P \bigl(X^\eps_\cdot\in A \bigr)\leq
{1\over p} \log\P \bigl(Y^\eps_\cdot\in A
\bigr)+ {(q-1)\gamma^2\eps
^{-2}\over2}.
\]

In what follows, we will choose $\alpha$ such that the function
$(a+\alpha\sigma)/\sigma^2$ is piecewise constant. Then the result
proved in the previous section will provide that, for given $f\in
C(0,T)$ and $\kappa>0$, there exists $\delta>0$ such that
\begin{equation}
\label{12} \limsup_{\eps\to0}\eps^{2}\log\P
\bigl(Y^\eps\in B_\delta(f)\bigr)\leq -\tilde I(f)+
{\kappa\over4},
\end{equation}
where $\tilde I$ is the rate functional that corresponds to the new
drift coefficient $a+\alpha\sigma$ and the same diffusion coefficient.
It is easy to verify using the representation \eqref{repI} and its
analogue for $\tilde I$ that we can choose $\gamma$ small enough so
that the above construction with arbitrary $\alpha$ such that $\|
\alpha
\|\leq\gamma$ yields
\begin{equation}
\label{13} \tilde I(f)\geq I(f)-{\kappa\over4}.
\end{equation}
In that case, we get
\[
\limsup_{\eps\to0}\eps^{2}\log\P \bigl(X^\eps
\in B_\delta(f) \bigr)\leq-{1\over
p} I(f)+
{\kappa\over4p}+{(q-1)\gamma^2\over2}.
\]
Now we are ready to summarize the entire argument. For given $f\in
C(0,T)$ and $\kappa>0$, we take $p>1$ close enough to 1 such that
\[
{1\over p} I(f)\geq I(f)-{\kappa\over4}.
\]
Then we take $\gamma>0$ small enough such that \eqref{13} holds and
\[
{(q-1)\gamma^2\over2}={\gamma^2\over2(p-1)}\leq{\kappa\over4}.
\]

Observe that the function $a/\sigma^2$ has left and right limits at
every point and is constant on
$(-\infty, R]$ and $[R,\infty)$. Then it can be approximated by
piecewise constant functions in the uniform norm. This means that we
can find $\alpha$ with $\|\alpha\|\leq\gamma$ such that the function
$(a+\alpha\sigma)/\sigma^2$ is piecewise constant. Then there exists
$\delta>0$ such that \eqref{12} holds, and we finally deduce
\[
\limsup_{\eps\to0}\eps^{2}\log\P \bigl(X^\eps
\in B_\delta(f) \bigr)\leq-I(f)+{\kappa\over4}+
{\kappa\over4p}+{\kappa\over4}+{\kappa\over
4}<-I(f)+
\kappa,
\]
which completes the proof of \eqref{wLDPup}.

Exactly the same argument provides the proof of \eqref{wLDPlow} as
well, with the minor change that now the law of $Y^\eps$ should be
expressed in the terms of $X^\eps$; that is, we should use the Girsanov
theorem in the following form:
\[
\P \bigl(Y^\eps_\cdot\in A \bigr)=\E1_{X^\eps\in A}
\mathcal{H}_T^\eps,
\]
\[
\mathcal{H}_T^\eps:=\exp \Biggl[\eps^{-1}\int
_0^T\alpha \bigl(X_t^\eps
\bigr)\, dW_s-{\eps^{-2}\over2}\int_0^T
\alpha^2 \bigl(X_t^\eps \bigr)\, ds \Biggr].
\]
The rest of the proof remains literally the same; we omit the detailed
exposition.

\section*{Acknowledgements} The authors express their gratitude to the
referee for useful remarks and to A.~Pilipen\-ko for highly instructive
discussion. which led us to a significant improvement of the presentation.

%% Acknowledgements %%
%%%%%%%%%%%%%%%%%%%%%%
%\section*{Acknowledgments}

%\bibliographystyle{vmsta-mathphys}
%\bibliography{biblio}

\begin{thebibliography}{14}


%b2 ###
\bibitem{alanyali-hajek1}
%
\begin{barticle}
\bauthor{\bsnm{Alanyali}, \binits{M.}},
\bauthor{\bsnm{Hajek}, \binits{B.}}:
\batitle{On large deviations of Markov processes with discontinuous
statistics}.
\bjtitle{Ann. Appl. Probab.}
\bvolume{8},
\bfpage{45}--\blpage{66}
(\byear{1998}).
\bid{doi={10.1214/aoap/\\1027961033}, mr={1620409}}
\end{barticle}
%
%
\OrigBibText
%
\begin{barticle}
\bauthor{\bsnm{Alanyali}, \binits{M.}},
\bauthor{\bsnm{Hajek}, \binits{B.}}:
\batitle{On large deviations of Markov processes with discontinuous
statistics}.
\bjtitle{Ann. Appl. Probab.}
\bvolume{8},
\bfpage{45}--\blpage{66}
(\byear{1998})
\end{barticle}
%
\endOrigBibText
\bptok{structpyb}%
\endbibitem

%b1 ###
\bibitem{alanyali-hajek2}
%
\begin{barticle}
\bauthor{\bsnm{Alanyali}, \binits{M.}},
\bauthor{\bsnm{Hajek}, \binits{B.}}:
\batitle{On large deviations in load sharing network}.
\bjtitle{Ann. Appl. Probab.}
\bvolume{8},
\bfpage{67}--\blpage{97}
(\byear{1998}).
\bid{doi={10.1214/aoap/1027961034}, mr={1620334}}
\end{barticle}
%
%
\OrigBibText
%
\begin{barticle}
\bauthor{\bsnm{Alanyali}, \binits{M.}},
\bauthor{\bsnm{Hajek}, \binits{B.}}:
\batitle{On large deviations in load sharing network}.
\bjtitle{Ann. Appl. Probab.}
\bvolume{8},
\bfpage{67}--\blpage{97}
(\byear{1998})
\end{barticle}
%
\endOrigBibText
\bptok{structpyb}%
\endbibitem


%b3 ###
\bibitem{billingsley}
%
\begin{bbook}
\bauthor{\bsnm{Billingsley}, \binits{P.}}:
\bbtitle{Convergence of Probability Measures}.
\bpublisher{Wiley}
(\byear{1968}).
\bid{mr={0233396}}
\end{bbook}
%
%
\OrigBibText
%
\begin{bbook}
\bauthor{\bsnm{Billingsley}, \binits{P.}}:
\bbtitle{Convergence of Probability Measures}.
\bpublisher{Wiley}
(\byear{1968})
\end{bbook}
%
\endOrigBibText
\bptok{structpyb}%
\endbibitem

%b4 ###
\bibitem{chiang-sheu3}
%
\begin{barticle}
\bauthor{\bsnm{Chiang}, \binits{T.S.}},
\bauthor{\bsnm{Sheu}, \binits{S.J.}}:
\batitle{Large deviations of small perturbation of some unstable systems}.
\bjtitle{Stoch. Anal. Appl.}
\bvolume{15},
\bfpage{31}--\blpage{50}
(\byear{1997}).
\bid{doi={10.1080/\\07362999708809462}, mr={1429856}}
\end{barticle}
%
%
\OrigBibText
%
\begin{barticle}
\bauthor{\bsnm{Chiang}, \binits{T.S.}},
\bauthor{\bsnm{Sheu}, \binits{S.J.}}:
\batitle{Large deviations of small perturbation of some unstable systems}.
\bjtitle{Stochastic Anal. Appl.}
\bvolume{15},
\bfpage{31}--\blpage{50}
(\byear{1997})
\end{barticle}
%
\endOrigBibText
\bptok{structpyb}%
\endbibitem

%b5 ###
\bibitem{chiang-sheu1}
%
\begin{barticle}
\bauthor{\bsnm{Chiang}, \binits{T.S.}},
\bauthor{\bsnm{Sheu}, \binits{S.J.}}:
\batitle{Large deviations of diffusion processes with discontinuous
drift and
their occupation times}.
\bjtitle{Ann. Probab.}
\bvolume{28},
\bfpage{140}--\blpage{165}
(\byear{2000}).
\bid{doi={10.1214/\\aop/1019160115}, mr={1756001}}
\end{barticle}
%
%
\OrigBibText
%
\begin{barticle}
\bauthor{\bsnm{Chiang}, \binits{T.S.}},
\bauthor{\bsnm{Sheu}, \binits{S.J.}}:
\batitle{Large deviations of diffusion processes with discontinuous
drift and
their occupation times}.
\bjtitle{Ann. Probab.}
\bvolume{28},
\bfpage{140}--\blpage{165}
(\byear{2000})
\end{barticle}
%
\endOrigBibText
\bptok{structpyb}%
\endbibitem

%b6 ###
\bibitem{chiang-sheu2}
%
\begin{barticle}
\bauthor{\bsnm{Chiang}, \binits{T.S.}},
\bauthor{\bsnm{Sheu}, \binits{S.J.}}:
\batitle{Small perturbations of diffusions in inhomogeneous media}.
\bjtitle{Ann. Inst. Henri Poincar\'{e}}
\bvolume{38},
\bfpage{285}--\blpage{318}
(\byear{2002}).
\bid{doi={10.1016/S0246-\\0203(01)01101-3}, mr={1899455}}
\end{barticle}
%
%
\OrigBibText
%
\begin{barticle}
\bauthor{\bsnm{Chiang}, \binits{T.S.}},
\bauthor{\bsnm{Sheu}, \binits{S.J.}}:
\batitle{Small perturbations of diffusions in inhomogeneous media}.
\bjtitle{Ann. Inst. Henri Poincar\'e}
\bvolume{38},
\bfpage{285}--\blpage{318}
(\byear{2002})
\end{barticle}
%
\endOrigBibText
\bptok{structpyb}%
\endbibitem

%b7 ###
\bibitem{dembo-zeitouni}
%
\begin{bbook}
\bauthor{\bsnm{Dembo}, \binits{A.}},
\bauthor{\bsnm{Zeitouni}, \binits{O.}}:
\bbtitle{Large Deviations Techniques and Applications}.
\bpublisher{Springer},
\blocation{New York}
(\byear{1998}).
\bid{doi={10.1007/978-1-4612-5320-4}, mr={1619036}}
\end{bbook}
%
%
\OrigBibText
%
\begin{bbook}
\bauthor{\bsnm{Dembo}, \binits{A.}},
\bauthor{\bsnm{Zeitouni}, \binits{O.}}:
\bbtitle{Large Deviations Techniques and Applications}.
\bpublisher{Springer},
\blocation{New York}
(\byear{1998})
\end{bbook}
%
\endOrigBibText
\bptok{structpyb}%
\endbibitem

%b8 ###
\bibitem{feng-kurtz}
%
\begin{bbook}
\bauthor{\bsnm{Fend}, \binits{J.}},
\bauthor{\bsnm{Kurtz}, \binits{T.G.}}:
\bbtitle{Large Deviations for Stochastic Processes}.
\bpublisher{Am. Math. Soc.},
\blocation{Providence, RI}
(\byear{2006}).
\bid{doi={10.1090/surv/131}, mr={2260560}}
\end{bbook}
%
%
\OrigBibText
%
\begin{bbook}
\bauthor{\bsnm{Fend}, \binits{J.}},
\bauthor{\bsnm{Kurtz}, \binits{T.G.}}:
\bbtitle{Large Deviations for Stochastic Processes}.
\bpublisher{Amer. Math. Soc., Providence},
\blocation{RI}
(\byear{2006})
\end{bbook}
%
\endOrigBibText
\bptok{structpyb}%
\endbibitem

%b9 ###
\bibitem{freidlin-wentzell}
%
\begin{bbook}
\bauthor{\bsnm{Freidlin}, \binits{M.I.}},
\bauthor{\bsnm{Wentzell}, \binits{A.D.}}:
\bbtitle{Random Perturbations of Dynamical Systems}.
\bpublisher{Springer},
\blocation{New York}
(\byear{1984}).
\bid{doi={10.1007/978-1-4684-0176-9}, mr={0722136}}
\end{bbook}
%
%
\OrigBibText
%
\begin{bbook}
\bauthor{\bsnm{Freidlin}, \binits{M.I.}},
\bauthor{\bsnm{Wentzell}, \binits{A.D.}}:
\bbtitle{Random Perturbations of Dynamical Systems}.
\bpublisher{Springer},
\blocation{New York}
(\byear{1984})
\end{bbook}
%
\endOrigBibText
\bptok{structpyb}%
\endbibitem

%b10 ###
\bibitem{watanabe-ikeda}
%
\begin{bbook}
\bauthor{\bsnm{Ikeda}, \binits{N.}},
\bauthor{\bsnm{Watanabe}, \binits{S.}}:
\bbtitle{Stochastic Differential Equations and Diffusion Processes}.
\bpublisher{North-Holland Publishing Company},
\blocation{Amsterdam}
(\byear{1981}).
\bid{mr={0637061}}
\end{bbook}
%
%
\OrigBibText
%
\begin{bbook}
\bauthor{\bsnm{Ikeda}, \binits{N.}},
\bauthor{\bsnm{Watanabe}, \binits{S.}}:
\bbtitle{Stochastic Differential Equations and Diffusion Processes}.
\bpublisher{North-Holland Publishing Company},
\blocation{Amsterdam}
(\byear{1981})
\end{bbook}
%
\endOrigBibText
\bptok{structpyb}%
\endbibitem

%b11 ###
\bibitem{krykun}
%
\begin{barticle}
\bauthor{\bsnm{Krykun}, \binits{I.H.}}:
\batitle{Large deviation principle for stochastic equations with local time}.
\bjtitle{Theory Stoch. Process.}
\bvolume{15}(\bissue{31}),
\bfpage{140}--\blpage{155}
(\byear{2009}).
\bid{mr={2598533}}
\end{barticle}
%
%
\OrigBibText
%
\begin{barticle}
\bauthor{\bsnm{Krykun}, \binits{I.H.}}:
\batitle{Large deviation principle for stochastic equations with local time}.
\bjtitle{Theory of Stochastic Processes}
\bvolume{15}(\bissue{31}),
\bfpage{140}--\blpage{155}
(\byear{2009})
\end{barticle}
%
\endOrigBibText
\bptok{structpyb}%
\endbibitem

%b12 ###
\bibitem{kulik}
%
\begin{barticle}
\bauthor{\bsnm{Kulik}, \binits{A.M.}}:
\batitle{Large deviation estimates for solutions of Fredholm-type equations
with small random operator perturbations}.
\bjtitle{Theory Stoch. Process.}
\bvolume{11}(\bissue{27}),
\bfpage{81}--\blpage{95}
(\byear{2005}).
\bid{mr={2327450}}
\end{barticle}
%
%
\OrigBibText
%
\begin{barticle}
\bauthor{\bsnm{Kulik}, \binits{A.M.}}:
\batitle{Large deviation estimates for solutions of Fredholm-type equations
with small random operator perturbations}.
\bjtitle{Theory of Stochastic Processes}
\bvolume{11}(\bissue{27}),
\bfpage{81}--\blpage{95}
(\byear{2005})
\end{barticle}
%
\endOrigBibText
\bptok{structpyb}%
\endbibitem

%b13 ###
\bibitem{kulik-sobolieva}
%
\begin{barticle}
\bauthor{\bsnm{Kulik}, \binits{A.M.}},
\bauthor{\bsnm{Soboleva}, \binits{D.D.}}:
\batitle{Large deviations for one-dimensional SDE with discontinuous diffusion
coefficient}.
\bjtitle{Theory Stoch. Process.}
\bvolume{18(34)}(\bissue{1}),
\bfpage{101}--\blpage{110}
(\byear{2012}).
\bid{mr={3124766}}
\end{barticle}
%
%
\OrigBibText
%
\begin{barticle}
\bauthor{\bsnm{Kulik}, \binits{A.M.}},
\bauthor{\bsnm{Soboleva}, \binits{D.D.}}:
\batitle{Large deviations for one-dimensional SDE with discontinuous diffusion
coefficient}.
\bjtitle{Theory of Stochastic Processes}
\bvolume{18}(\bissue{34}),
\bfpage{101}--\blpage{110}
(\byear{2012})
\end{barticle}
%
\endOrigBibText
\bptok{structpyb}%
\endbibitem

%b14 ###
\bibitem{sobolieva}
%
\begin{barticle}
\bauthor{\bsnm{Sobolieva}, \binits{D.D.}}:
\batitle{Large deviations for one-dimensional SDE's with discontinuous
coefficients}.
\bjtitle{Theory Stoch. Process.}
\bvolume{18(34)}(\bissue{2}),
\bfpage{102}--\blpage{108}
(\byear{2012}).
\bid{mr={3124779}}
\end{barticle}
%
%
\OrigBibText
%
\begin{barticle}
\bauthor{\bsnm{Sobolieva}, \binits{D.D.}}:
\batitle{Large deviations for one-dimensional SDE's with discontinuous
coefficients}.
\bjtitle{Theory of Stochastic Processes}
\bvolume{18}(\bissue{34}),
\bfpage{102}--\blpage{108}
(\byear{2012})
\end{barticle}
%
\endOrigBibText
\bptok{structpyb}%
\endbibitem

\end{thebibliography}
%

%
\end{document}